\let\oldtocsection=\tocsection
\let\oldtocsubsection=\tocsubsection
\renewcommand{\tocsection}[2]{\hspace{0em}\oldtocsection{#1}{#2}}
\renewcommand{\tocsubsection}[2]{\hspace{1em}\oldtocsubsection{#1}{#2}}
\tikzset{node distance=3cm, auto}
\def\@secnumfont{\bfseries}
\def\section{\@startsection{section}{1}%
  \z@{.7\linespacing\@plus\linespacing}{.5\linespacing}%
  {\normalfont\Large\bfseries}}
\def\subsection{\@startsection{subsection}{2}%
  \z@{.5\linespacing\@plus.7\linespacing}{-.5em}%
  {\normalfont\large\bfseries}}
  \def\subsubsection{\@startsection{subsubsection}{3}%
  \z@{.5\linespacing\@plus.7\linespacing}{-.5em}%
  {\normalfont\bfseries}}
\newtheorem{thm}{Theorem}[section]
\newtheorem{lemma}[thm]{Lemma}
\newtheorem{prop}[thm]{Proposition}
\newtheorem{cor}[thm]{Corollary}
\newtheorem{remark}[thm]{Remark}
\newtheorem{rmk}[thm]{Remark}
\newtheorem{conjecture}[thm]{Conjecture}
\newtheorem{definition}[thm]{Definition}
\newtheorem{example}[thm]{Example}
\theoremstyle{remark}
\numberwithin{equation}{subsection} 
\numberwithin{figure}{section}
\numberwithin{table}{subsection}
\newcommand{\oCP}{{\overline{{\C}P}}\!\,}
\newcommand{\N}{{\mathbb{N}}}
\newcommand{\Ee}{\mathcal{E}}
\newcommand{\Tt}{\mathcal{T}}
\newenvironment{itemlist}
   { \begin{list} {$\bullet$}
         { \setlength{\topsep}{.5ex}  \setlength{\itemsep}{.5ex} \setlength{\leftmargin}{2.5ex} } }
   { \end{list} }
    \newcommand{\vn}{{\vec{n}}}
 \newcommand{\oxr}{{\ov{x}\rho}}
 \newcommand{\oyl}{{\ov{y}\la}}
 \newcommand{\ovx}{{\ov{x}}}
 \newcommand{\ovy}{{\ov{y}}}
  \newcommand{\ovr}{\overrightarrow}
\newcommand{\bbm}{{\bf{m}}}
\newcommand{\bw}{{\bf{w}}}
\newcommand{\bx}{{\bf{x}}}
\newcommand{\bE}{{\bf{E}}}
\newcommand{\bB}{{\bf{B}}}
\newcommand{\NI}{{\noindent}}
\newcommand{\Cc}{{\mathcal C}}
\newcommand{\ov}{\overline}
\newcommand{\la}{{\lambda}}
\newcommand{\MS}{{\medskip}}
\newcommand{\er}{{\Diamond}}
\newcommand{\Z}{\mathbb{Z}}
\newcommand{\R}{\mathbb{R}}
\newcommand{\C}{\mathbb{C}}
\newcommand{\CP}{\mathbb{CP}}
\newcommand{\eps}{\varepsilon}
\newcommand{\E}{\mathcal{E}}
\newcommand{\acc}{\mathrm{acc}}
\newcommand{\sembeds}{\stackrel{s}{\hookrightarrow}}
\newcommand{\se}{\stackrel{s}{\hookrightarrow}}
\newcommand{\dashover}[2][\mathop]{#1{\mathpalette\df@over{{\dashfill}{#2}}}}
\newcommand{\fillover}[2][\mathop]{#1{\mathpalette\df@over{{\solidfill}{#2}}}}
\newcommand{\df@over}[2]{\df@@over#1#2}
\newcommand\df@@over[3]{%
  \vbox{
    \offinterlineskip
    \ialign{##\cr
      #2{#1}\cr
      \noalign{\kern1pt}
      $\m@th#1#3$\cr
    }
  }%
}
\newcommand{\dashfill}[1]{%
  \kern-.5pt
  \xleaders\hbox{\kern.5pt\vrule height.4pt width \dash@width{#1}\kern.5pt}\hfill
  \kern-.5pt
}
\newcommand{\dash@width}[1]{%
  \ifx#1\displaystyle
    2pt
  \else
    \ifx#1\textstyle
      1.5pt
    \else
      \ifx#1\scriptstyle
        1.25pt
      \else
        \ifx#1\scriptscriptstyle
          1pt
        \fi
      \fi
    \fi
  \fi
}
\newcommand{\solidfill}[1]{\leaders\hrule\hfill}
\title{Unobstructed Embeddings in Hirzebruch surfaces}
\author{Nicki Magill}
\address{Mathematics Department, Cornell University}
\email{nm627@cornell.edu}
\thanks{NSF Graduate Research Grant DGE-1650441}
\keywords{symplectic embeddings in four dimensions, ellipsoidal capacity function, infinite staircases, almost toric fibrations}
\subjclass{53D05}
\date{\today}
\begin{document}

\begin{abstract} This paper continues the study of the ellipsoid embedding function of symplectic Hirzebruch surfaces parametrized by $b \in (0,1)$, the size of the symplectic blowup. Cristofaro-Gardiner, et al. (arxiv: 2004.13062) found that if the embedding function for a Hirzebruch surface has an infinite staircase, then the function is equal to the volume curve at the accumulation point of the staircase. Here, we use almost toric fibrations to construct full-fillings at the accumulation points for an infinite family of recursively defined irrational $b$-values implying these $b$ are potential staircase values. The $b$-values are defined via a family of obstructive classes defined in Magill-McDuff-Weiler (arxiv:2203.06453). There is a correspondence between the recursive, interwoven structure of the obstructive classes and the sequence of possible mutations in the almost toric fibrations. This result is used in Magill-McDuff-Weiler (arxiv:2203.06453) to show that these classes are exceptional and that these $b$-values do have infinite staircases. 
\end{abstract}
\maketitle
\tableofcontents

\section{Introduction} 

Given a closed 4-dimensional symplectic manifold $(X,\omega),$ we define its {\bf ellipsoid embedding function} to be 
\[ c_X(z):=\inf \{ \lambda \ | \ E(1,z) \se \lambda X\},\]
where $z \geq 1$ is a real variable, $\lambda X:=(X,\lambda \omega)$ is the symplectic scaling, and the ellipsoid $E(1,z)\subset \C^2$ is the set
\[ E(1,z)=\{ (\zeta_1,\zeta_2) \in \C^2 \ | \ \pi\left(|\zeta_1|^2+\frac{|\zeta_2|^2}{z}\right)<1\}. \] 

Here, we consider the case where the target, $X=H_b:=\CP^2_1 \# \oCP^2_b,$ is a one-fold blowup of $\CP^2$ where the line has area $1$ and the exceptional divisor has area given by the parameter $0 \leq b<1.$ Let $c_b(z):=c_{H_b}(z)$ denote the embedding function for $H_b$. This function has been previously studied in \cite{ICERM,MM,ball,AADT}. There are many papers about the ellipsoid embedding function for other targets including \cite{CG, FM,Usher}.

As symplectic embeddings preserve volume, the lower bound for the embedding function
\[ c_{b}(z) \geq \sqrt{\frac{z}{1-b^2}}=:V_b(z) \]
is immediate, where $1-b^2$ is the appropriately normalized volume of $H_b.$

As shown in \cite[Prop.2.1]{AADT}, $c_{b}(z)$ is continuous, non-decreasing, and piecewise linear when not equal to the volume curve $V_b(z)$. We say $H_b$ has an {\bf infinite staircase} if $c_{b}(z)$ has infinitely many nonsmooth points. Furthermore, by \cite{BHO} and \cite{AADT}, $H_b$ satisfies packing stability, which states that for large enough $z$, $c_{b}(z)$ is equal to the volume curve. Hence, if there is an infinite staircase, the nonsmooth points must accumulate at some finite value denoted $\acc(b).$ Prior work including \cite{AADT}, \cite{ICERM}, and \cite{MM} established that there exist infinitely many $b$-values with infinite staircases. A complete classification of $b \in [0,1)$ that have infinite staircases remains unknown. The first result that there is a target with an infinite staircase is the case $b=0$, i.e. the target is $\CP^2$, found in \cite{ball}. In \cite{AADT}, the authors conjectured that $b=0$ and $b=1/3$ are the only rational $b$ for which $c_b(z)$ has an infinite staircase. 

In \cite[Theorem 1.11]{AADT}, the authors identified a useful criterion for finding potential $b$-values with infinite staircases. Namely, if there is an infinite staircase accumulating at $\acc(b)$, $\acc(b)$ must be the larger solution to: 
\[z^2-\left(\frac{(3-b)^2}{1-b^2}-2\right)z+1=0,\] 
where $3-b$ is the affine perimeter of the corresponding Delzant polygon and $1-b^2$ is the symplectic volume of $H_b$. Further, if there is an infinite staircase, then 
\begin{equation}  \label{eq:accVol} c_{b}(\acc(b))=\sqrt{\frac{\acc(b)}{1-b^2}}=V_b(\acc(b)). \end{equation}

\begin{center}
\begin{figure}[ht]
\begin{tikzpicture}
\begin{axis}[
	axis lines = middle,
	xtick = {5.5,6,6.5,7},
	ytick = {2.5,3,3.5},
	tick label style = {font=\small},
	xlabel = $z$,
	ylabel = $y$,
	xlabel style = {below right},
	ylabel style = {above left},
	xmin=5.4,
	xmax=7.2,
	ymin=2.4,
	ymax=3.6,
	grid=major,
	width=3in,
	height=2.25in]
\addplot [red, thick,
	domain = 0:0.7,
	samples = 120
]({ (((3-x)*(3-x)/(2*(1-x^2))-1)+sqrt( ((3-x)*(3-x)/(2*(1-x^2))-1)* ((3-x)*(3-x)/(2*(1-x^2))-1) -1 ))},{sqrt(( (((3-x)*(3-x)/(2*(1-x^2))-1)+sqrt( ((3-x)*(3-x)/(2*(1-x^2))-1)* ((3-x)*(3-x)/(2*(1-x^2))-1) -1 )))/(1-x^2))});
% \addplot [black, only marks, very thick, mark=*] coordinates{(6,2.5)};
\addplot [blue, only marks, very thick, mark=*] coordinates{(6.85,2.62)};
\addplot [green, only marks, very thick, mark=*] coordinates{(5.83,2.56)};
\end{axis}
\end{tikzpicture}
\caption{ This paramterized curve was first studied in the work of Bertozzi, et. al, and this figure is first found in \cite[Figure 1.1.4]{ICERM}. The curve shows the location of the accumulation point $(z,y)=(\acc(b), V_b(\acc(b)))$ for $0\le b< 1$.
The blue point with $b=0$ is at $(\tau^4, \tau^2)$ and is the accumulation point for the Fibonacci stairs.  The green point  
is the accumulation point for $b=1/3$, and is the minimum of the function $b\mapsto\acc(b)$.  
The family of $b$-values in Theorem~\ref{thm:Main1} all have $z \geq 7.$ 
} \label{fig:accpt} 
\end{figure}
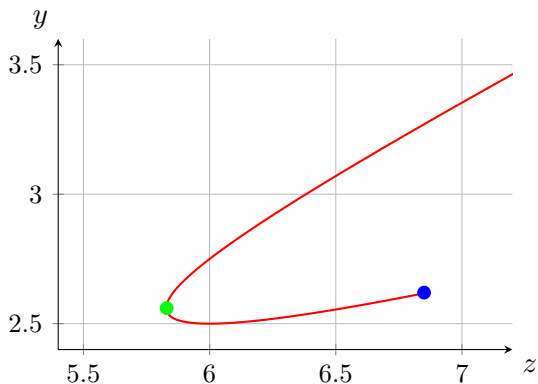
\end{center}

If the condition \eqref{eq:accVol} is satisfied for a particular $b$-value, we say that $b$ is {\bf unobstructed}. On the other hand, if $c_{b}(\acc(b))>V_b(\acc(b))$, we say $b$ is {\bf blocked}. 

In this paper, we find an infinite family of $b$-values for which $b$ is unobstructed by constructing a sequence of almost toric fibrations that limits to a full-filling at the accumulation point. An {\bf almost toric fibration} (ATF) on $H_b$ is a Lagrangian fibration $\pi:H_b \to B$ where $B$ is a 2-dimensional base and there are restrictions on the types of singularities of the fibration. See Section~\ref{sec:ATFdef}  for a more precise definition. A classification given by \cite{LS} of ATFs on general compact four-manifolds without boundary implies that we can represent ATFs on $H_b$ by decorated quadrilaterals, known as almost toric base diagrams in the literature; see Definition~\ref{def:Q} and Figure~\ref{fig:basediagr}. The decorations on the quadrilaterals denote the various singularities of the fibration. For general background on ATFs, we recommend \cite{S} and \cite{evanBook}.

\begin{figure}
    \centering
     \begin{overpic}[scale=1,unit=0.5mm]{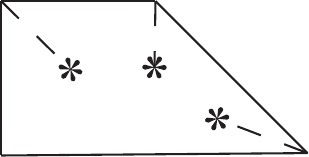}
 	\put (-6,-6) {$O$}
  \put (105,-5) {$X$}
   \put (51,55) {$V$}
    \put (-5,55) {$Y$}
    \put (10,45) {$\vn_Y$}
    \put (40,45) {$\vn_V$}
    \put (78,3) {$\vn_X$}
    \put (-20,25) {$1-b$}
    \put (25,-7) {$1$}
    \put (85,25) {$1-b$}
    \put (25,55) {$b$}
  \end{overpic}
    \caption{This is the decorated quadrilateral $Q_b:=OXVY$. It is given by the moment polygon for $H_b$ with the three nodal rays $\vn_Y = (-1,1),\vn_V = (0,-1),$ and $\vn_X = (-2,1)$ inserted.
    }
    \label{fig:basediagr}
\end{figure}

We begin with the base diagram $Q_b$ for $H_b$ defined in Figure~\ref{fig:basediagr}. This base diagram is not unique, and we can perform sequences of mutations to $Q_b$ to arrive at different base diagrams for the same ATF on $H_b.$ The three mutations we consider, see Definition~\ref{def:mutation}, on a quadrilateral $Q:=OXVY$ are denoted $x,v,y$ where for $w=x,y,z$, $w:Q \to wQ=OX_wV_wY_w$ is a new quadrilateral. These mutations can be composed with each other. See Figure~\ref{fig:yMutIntro} for an example of the mutation $y.$

Following the authors of \cite{CV} and \cite{AADT}, sequences of mutations construct sequences of symplectic embeddings. In particular, 
let $Q=OXVY$ be a decorated quadrilateral corresponding to an ATF on $H_b$ where $O$ is the origin, $X$ is on the $x$-axis, and $Y$ is on the $y$-axis. Then, following work of Symington in \cite{S}, \cite[Proposition 2.2.7]{AADT} states that for any $0<\eps<1,$
 \[(1-\eps)E\left(|OX|,|OY|\right) \sembeds H_b.\] See Figure~\ref{fig:yMutIntro} to see how applying a mutation gives rise to a different embedding. 

  \begin{center}
 \begin{figure}
 \begin{overpic}[scale=0.55,unit=0.5mm]{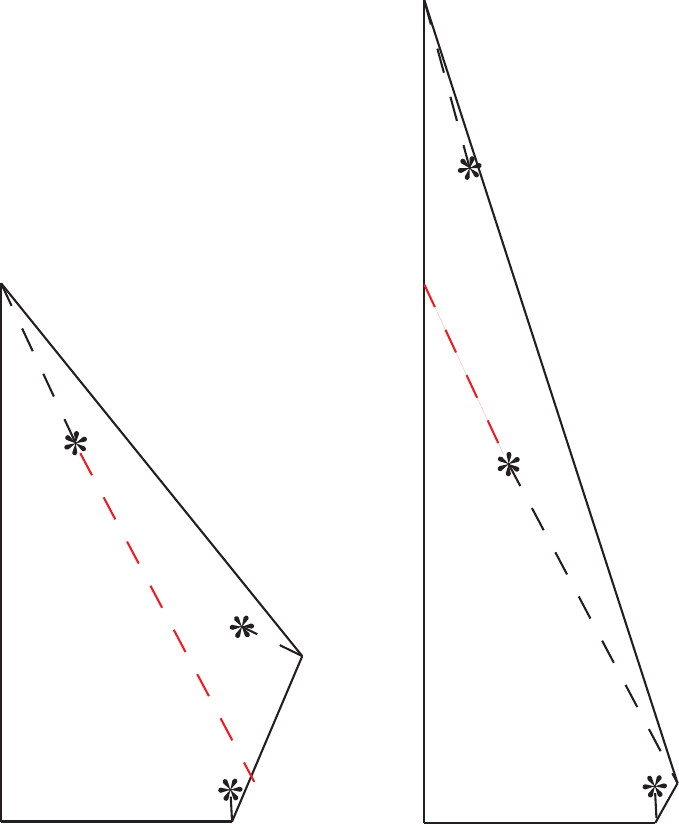}
 	\put (15,30) {$R$}
  \put (95,30) {$R$}
  \put (30,50) {$S$}
   \put (81,100) {$MS$}
   \put (-7,-7) {$O$}
   \put (72,-7) {$O$}
   \put (43,-7) {$X$}
   \put (61,30) {$V$}
   \put (-7,103) {$Y$}
   \put (52,6) {{\color{red} $V_y$}}
   \put (120,-7) {$X$}
   \put (128,6) {$V_y$}
   \put (70,152) {$Y_y$}
   \put (72,103) {{\color{red} $Y$}}
  \end{overpic}
     \caption{On the left is $Q:=OXVY$, an ATF base diagram for $H_b$. On the right is $yQ:=OXV_yY_y.$ $M$ is an integral affine transformation that fixes the nodal ray at $Y$ and $M(YV_yV)=YV_yY_y$. The quadrilateral $Q$ implies $E\left(|OX|,|OY|\right) \sembeds H_b$, and $yQ$ implies $E\left(|OX|,|OY_y|\right) \sembeds H_b$. As $OXY_y$ has a larger area than $OXY$, this embedding fills more volume of $H_b.$}
     \label{fig:yMutIntro}
 \end{figure}
 \end{center} 

We can now state the main theorem, which is proved in Section~\ref{sec:MainProof}. 

\begin{thm} \label{thm:Main1} 
For every word $wyv^{n+2}$ where $w$ is any finite sequence of $x$ and $y$ and $n$ is any non negative integer, there is a $b:=b(w,n)$ such that 
\[ \lim_{k \to \infty} y^kwyv^{n+2}Q_b\] is a triangle. Furthermore, this limiting figure implies  that for this $b$-value \[c_b(\acc(b))=V_b(\acc(b)),\] i.e. $b$ is unobstructed. 
\end{thm}

To prove the theorem, for all such $w$, we compute the data of the base diagram $y^kwyv^{n+2}Q_b$. The formulas for these are given in Definition~\ref{def:QT} and Proposition~\ref{prop:triple} while the majority of the computations are in Section~\ref{sec:computations}. 

In Section~\ref{sec:obs} of this paper, we explain how Theorem~\ref{thm:Main1} is related to a family of exceptional classes, that is symplectic spheres of self intersection $-1$ living in various blowups of $\CP^2$, defined by the author, McDuff, and Weiler in \cite{MMW}. In particular, \cite{MMW} uses Theorem~\ref{thm:Main1} to show this family of exceptional classes are represented by embedded spheres. One way this is used in \cite{MMW} is to show that all the $b$-values in Theorem~\ref{thm:Main1} have infinite staircases. 

For a given word $wyv^{n+2},$ the $b$-value in Theorem~\ref{thm:Main1} can be computed explicitly using Definition~\ref{def:brho}. 
We now give some context on how to visualize the $b$-values in Theorem~\ref{thm:Main1}. 
For a fixed $n$, the $b$-values in Theorem~\ref{thm:Main1} are in the interval $\left(\tfrac{n+1}{n+2},\tfrac{n+2}{n+3}\right)$. The word $w$ then determines where $b$ lies within this interval. The $b$-values are intertwined in a Cantor-like structure depending on if we take $x$ or $y$ mutations. See Figure~\ref{fig:tree} to visualize this and Remark~\ref{rmk:cantor} for more information. 

Unlike in \cite{AADT} and \cite{CV}, the sequence of embeddings constructed via consecutive $y$ mutations do not correspond to optimal embeddings, and thus, do not give us points on the embedding function. Rather, these correspond to a sequence of embeddings that lie strictly above the embedding function on a horizontal line through the accumulation point. See Remark~\ref{rmk:irrb} for more details about this. Regardless, evidence from this work, \cite{M}, and \cite{REU} lead to the following conjecture. 

   \begin{center}
 \begin{figure}
 \begin{overpic}[scale=1,unit=0.5mm]{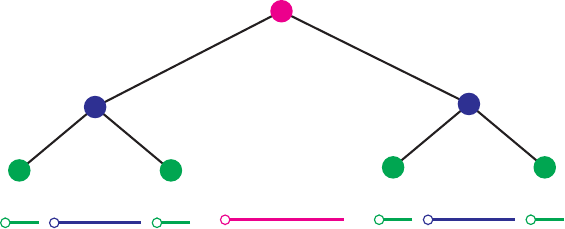}
 	\put (90,80) {\color{Magenta} $xyv^{n+2}$}
    \put (130,60) {$y$}
     \put (60,60) {$x$}
     \put (20,50) {\color{blue} $x^2yv^{n+2}$}
     \put (150,50) {\color{blue} $yxyv^{n+2}$}
      \put (12,30) {$x$}
       \put (50,30) {$y$}
       \put (133,30) {$x$}
       \put (178,30) {$y$}
       \put (190,10) {\color{ForestGreen} $y^2xyv^{n+2}$}
        \put (100,10) {\color{ForestGreen} $xyxyv^{n+2}$}
        \put (-25,10) {\color{ForestGreen} $x^3yv^{n+2}$}
         \put (60,10) {\color{ForestGreen} $yx^2yv^{n+2}$}
         \put (-15,0) {$\frac{n+1}{n+2}$}
         \put (194,0) {$\frac{n+2}{n+3}$}
  \end{overpic}
  \vspace{5mm}
     \caption{For each $n$, we construct a labeled infinite tree as follows. Each vertex $f$ is labeled with some word $w_f$. The first vertex is labeled $xyv^{n+2}.$ At each vertex, there is an edge to the left corresponding to an $x$-mutation and an edge to the right corresponding to a $y$-mutation.  For a vertex $f$, the $b$-value for which $y^kxw_fQ_b$ limits to a triangle corresponds to the left endpoint of the interval directly below that vertex on the number line from $(\frac{n+1}{n+2},\frac{n+2}{n+3})$ drawn below the graph. The intervals are not to scale, but their ordering is correct.}
     \label{fig:tree}
 \end{figure}
 \end{center} 

\begin{conjecture} 
For all $0 \leq b <1$, the function $c_b(z)$ on the interval $[1,\acc(b))$ can be computed from ATF mutations to the base diagram $Q_b.$ 
\end{conjecture} 

The combined work of the authors in \cite{CV} and \cite{AADT} imply the conjecture for $b=0,1/3.$ The sequences of mutations considered in this paper suggest the appropriate word $w$ for other $b$-values. Further, the close relation between the numerics of obstructive classes and the ATF base diagrams seen in Section~\ref{sec:obs} gives insight on how to argue the embeddings from the base diagrams are optimal.

\begin{rmk} \rm {\bf (Future Work)} \label{rmk:future}
(i) Theorem~\ref{thm:Main1} is used in \cite{MMW} to show that for these unobstructed $b$, $c_{b}(z)$ has an infinite staircase. In particular, Theorem~\ref{thm:Main1} is cited in \cite{MMW}[Cor.3.1.5] to show certain homology classes in blowups of $\CP^2$ are represented by embedded spheres of self intersection $-1$ rather than immersed spheres. See Section~\ref{sec:obs} for more information about these homology classes. \MS

\NI (ii) The sequences of ATFs in Theorem~\ref{thm:Main1} was first used by the author in \cite{M} to prove the existence of infinite staircases in a two-fold blowup of $\CP^2$ with irrational blowup sizes. Following this work and \cite{M}, Farley, et. al, in \cite{REU} found that the same sequences of mutations can be used to prove the existence of infinite staircases for polydisks. One crucial aspect of the sequences in these papers is Lemma~\ref{lem:LimitMot} which states that the reciprocal of the volume at the accumulation point is a linear function of $b$ with rational coefficients. A similar statement holds for three and four fold blowups of $\CP^2,$ and thus, we expect similar results to generalize to those domains. Note, as mentioned in Remark~\ref{rmk:irrb}, this precise method only works for irrational blowup sizes. \MS

\NI (iii) In \cite{MM} and \cite{MMW}, the authors found symmetries that take the family of $b$-values in Theorem~\ref{thm:Main1} to other families of $b$-values with similar properties. In this paper, we restrict to looking at $v$-mutations where the nodal ray at $V$, denoted $\vn_V$, intersects the $OX$ side of $Q.$ Evidence suggests that considering $b$-values for when $\vn_V$ sometimes intersects $OX$ and sometimes intersects $OY$ would allow one to compute relevant embeddings for the $b$-values in Theorem~\ref{thm:Main1} acted on by the symmetries.  \MS

\NI (iv) Previous work including \cite{BS} and \cite{ES} have used the ATF mutations to study Lagrangian pinwheels, a class of $2$-dimensional cell complexes, in $\CP^2$. This paper contains lots of numerics about the pinwheels in $H_b.$ It would be interesting to see if similar results found in \cite{BS} and \cite{ES} hold for $H_b.$ 

\end{rmk}

  \NI {\bf Acknowledgements} This paper was written at the same time as \cite{MMW} with Dusa McDuff and Morgan Weiler. I would like to thank both of them for many useful conversations, suggestions, edits, and support. Additionally, I thank Tara Holm as my research advisor for introducing me to almost toric fibrations, encouraging me to use them as a proof method, and helpful edits. I would also like to thank Ana Rita Pires for useful conversations. Finally, the anonymous referee provided and inspired significant suggestions to the exposition and organization of the paper. I thank them for these detailed suggestions and other helpful comments.

  \section{Background and Definitions} \label{sec:ATFdef}
\begin{definition}[\cite{LS},\cite{S}]
    An {\bf almost toric fibration} of a symplectic 4-manifold is a Lagrangian fibration $\pi:(M,\omega)\to B$ such that any critical point of $\pi$ has a Darboux neighborhood $(x_1,y_1,x_2,y_2)$ with symplectic form $\omega_{st}=dx_1 \wedge dy_1+ dx_2 \wedge dy_2$, in which $\pi$ has one of the following local normal forms:
    \begin{align*}
\pi(x,y)&=(x_1,x_2^2+y_2^2), \quad \quad \text{elliptic, corank one} \\
\pi(x,y)&=(x_1^2+y_1^2,x_2^2+y_2^2), \quad \quad \text{elliptic, corank one}  \\
\pi(x,y)&=(x_1y_1+x_2y_2,x_1y_2-x_2y_1) \quad \quad \text{focus-focus} .
\end{align*}
\end{definition}

Given an almost toric fibration $H_b \to B$, we can construct an almost toric base diagram. To see how the base diagram is constructed from the fibration see \cite[Sec.8.1]{evanBook}. The base diagrams for $H_b$ that we consider look something like $Q_b$ defined in Figure~\ref{fig:basediagr}. As explained in \cite[Sec 8.1]{evanBook} and \cite[Sec. 6.2]{S}, $Q_b$ is an ATF base diagram on $H_b$ as it is the moment polygon of $H_b$ with three nodal trades. In the base diagrams we consider, there are three {\bf nodal rays} emanating from three of the vertices with a marked point at the end of the ray. The singularities are marked as follows: 
\begin{itemize}
    \item the marked points are the image of a focus-focus singularity whose preimage is a pinched torus;
    \item all other points in the interior of the polygon are regular values with preimage a torus;
    \item all points on the boundary of the polygon are the image of elliptic singularities with preimage either a point if it is a vertex with no nodal ray and otherwise, a circle.
\end{itemize}

One operation we can perform to a base diagram is a {\bf nodal slide} where we move the marked point along the ray. By \cite[Prop. 6.2]{S}, these base diagrams correspond to two different ATFs on $H_b,$ i.e. the symplectic structure does not change.

The base diagrams we consider will be determined by the data in the following definition. 

\begin{definition}\label{def:Q}
A {\bf decorated quadrilateral} $Q:=OXVY$ is a quadrilateral in $\R^2$ with various data and decorations. The data of a decorated quadrilateral includes the affine lengths\footnote{All vectors we are considering are multiples of some lattice vector. If $\vec{v}$ is such a vector from $A$ to $B$, then the affine length of $\vec{v}$ is the number $\la$ such that $\vec{v}=\la\vec{w}$ where $\vec{w}$ is the primitive lattice vector in the direction of $\vec{v}.$} 
of the sides (written as $|XV|$), the nodal rays $\vn_X$, $\vn_V$, $\vn_Y$ emanating from vertices $X,V,$ and $Y$ as well as the direction vectors $\ovr{XV}$ and $\ovr{VY}.$ Note the directions vectors $\ovr{OX}=(1,0)^T$ and $\ovr{OY}=(0,1)^T$ are fixed for any $Q.$
The nodal rays and direction vectors will always have affine length $1$.   Define $Q_b$ as the base diagram for the Delzant polygon of $H_b$ with three nodal rays inserted at vertices
 $X,V,Y$ where
 \[ \vn_X=\begin{pmatrix} -2 \\ 1 \end{pmatrix}, \quad \vn_V=\begin{pmatrix} 0 \\ -1 \end{pmatrix}, \quad \vn_Y=\begin{pmatrix} -1 \\ 1 \end{pmatrix}.
 \] 
 The data labeled on a diagram can be seen in Figure~\ref{fig:basediagr}. Lastly, for the vertices 
 $W=X,V,$ or $Y,$ define the {\bf monodromy matrix}  $M_W$ to be an integral affine matrix such that
    \[ M_W(\vn_W)=\vn_W, \quad 
    \quad M_Y\ovr{VY}=(0,-1)^T, \quad M_V\ovr{XV}=\ovr{VY}, \quad and \quad M_X\ovr{XV}=(1,0)^T.
    \]
\end{definition} 

Given an ATF base diagram of $H_b$, we can perform mutations to get different base diagrams for the same ATF on $H_b.$ Often, we combine a mutation with a nodal slide, which does correspond to a change of the fibration. For more background on  mutations, see \cite{V1}, \cite{V2}, \cite[Sec.2.4]{AADT}, and Figure~\ref{fig:yMutIntro} for an example of a mutation. 

\begin{definition} \label{def:mutation}
    Let $Q:=OXVY$ be some decorated quadrilateral. The mutations we define are as follows: 
    \begin{itemize}
 \item the mutation $v: Q \mapsto vQ=OX_vV_vY$  at $V$ is the case when $\vn_V$ extends to intersect the side $OX$ of $Q$ at $X_v$, the new position  of $X$ and $M_V(X_vXV)=X_vV_vV$. The nodal ray at $\vn_{X_x}=-\vn_V$;
  \item the mutation $x: Q \mapsto xQ=OX_xV_xY$  at $X$ is the case when $\vn_X$ extends to intersect the side $VY$ of $Q$ at $V_x$, the new position of $V$ and $M_X(V_xVX)=V_xX_xX.$ The nodal ray at $\vn_{V_x}=-\vn_X;$
 \item the mutation $y:Q \mapsto yQ=OXV_yY_y$  at $Y$ is the case when $\vn_Y$ extends to intersect the side $XV$ of $Q$ at $V_y$, the new position of  $V$ and $M_Y(V_yVY)=V_yY_yY$. The nodal ray at $\vn_{V_y}=-\vn_Y.$
 \end{itemize}

\end{definition}
    Note, the definition of these mutations insist the nodal rays extend to intersect the side mentioned in the definition. For instance, the extension of $\vn_V$ could instead intersect the side $OY$, but if this is case, we do not denote this mutation $vQ.$ To compute the image of the triangle in the definitions under the monodromy matrix, note that affine lengths are preserved under this transformation.

Motivated by Theorem~\ref{thm:Main1} where we consider mutations of the form $y^kwyv^{n+2}Q_b$, we begin by computing $yv^kQ_b$ under certain constraints for $b.$ See Figure~\ref{fig:vyMut} for a sketch of $yv^{n+2}Q_b$ for $b$ in this range.  

\begin{lemma} \label{lem:vmut}
Given an integer $k$, fix $b$ such that $\tfrac{k-1}{k}<b<\tfrac{k}{k+1}$, and let $Q_0:=Q_b$ as in Fig.~\ref{fig:basediagr}.
\begin{itemize}
    \item[{\rm(i)}] The quadrilateral $Q_k:=OX_{v_k}V_{v_k}Y$ that is obtained from $Q_0$ by
    performing $k$ $v$-mutations has the following data: 
\begin{align*}
    |OY|&=1-b, \qquad  |OX_{v_k}|=kb-(k-1), \\
    |V_{v_k}Y|&=k-(k-1)b, \qquad |X_{v_k}V_{v_k}|=1-b, \\
     \vn_{Y_{v_k}}&=\begin{pmatrix} 1 \\ -1 \end{pmatrix}, \quad \vn_{V_{v_k}}=\begin{pmatrix} -2k \\ -1 \end{pmatrix}, \quad  \vn_{X_{v_k}}=\begin{pmatrix} 2k-2 \\ 1 \end{pmatrix},  \\
    \ovr{V_{v_k}Y}&=\begin{pmatrix} -1 \\ 0 \end{pmatrix}, \quad \ovr{X_{v_k}V_{v_k}}=\begin{pmatrix} 2k-1 \\ 1 \end{pmatrix}. 
\end{align*}
\item[{\rm (ii)}] One $y$-mutation of $Q_k$ gives $yQ_k:=OX_{v_k}V_{yv_k}Y_y$ with data:
\begin{align*}
     |OY_y|&=1+k-kb, \qquad  |OX_{v_k}|=kb-(k-1), \\
    |V_{yv_k}Y_y|&=\frac{k+(1-k)b}{2k} , \qquad |X_{v_k}V_{yv_k}|=\frac{k-(1+k)b}{2k}, \\
     \vn_{Y_y}&=\begin{pmatrix} 1 \\ -(2+2k) \end{pmatrix}, \quad
    \vn_{V_{yv_k}}=\begin{pmatrix}
    -1 \\ 1
    \end{pmatrix},\quad
    \vn_{X_{v_k}}=\begin{pmatrix} 2k-2 \\ 1 \end{pmatrix},\\
    \ovr{V_{yv_k}Y_y}&=\begin{pmatrix} -1 \\ 1+2k \end{pmatrix}, \quad \ovr{X_{v_k}V_{yv_k}}=\begin{pmatrix} 2k-1 \\ 1 \end{pmatrix}. \\
\end{align*}
\end{itemize}
\end{lemma}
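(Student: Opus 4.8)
\textbf{Proof plan for Lemma~\ref{lem:vmut}.}
The plan is to prove both parts by induction, starting from the explicit data of $Q_0$ recorded in Figure~\ref{fig:basediagr} and applying the mutation rules for $v$- and $y$-mutations described just before Definition~\ref{def:QT}. For part (i), I would set up an induction on $k$. The base case $k=0$ is $Q_0$ itself, whose data $|OY|=1-b$, $|OX|=1$, $|XV|=1-b$, $|VY|=b$, with $\vn_Y=(-1,1)$, $\vn_V=(0,-1)$, $\vn_X=(-2,1)$, matches the claimed formulas at $k=0$ after noting that the listed nodal vectors are the primitive representatives of the stated directions. For the inductive step, I would assume $Q_k$ has the asserted data and compute $vQ_{k}$: the $v$-mutation extends $\vn_{V_{v_k}}=(-2k,-1)$ from the vertex $V_{v_k}$ until it hits the side $OX$, moves the vertex $X$ to that intersection point $X_{v_{k+1}}$, and transforms the relevant edge directions and nodal rays by the corresponding mutation matrix $M_v$. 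The key bookkeeping is: (a) the affine length $|OX_{v_{k+1}}|$ becomes $|OX_{v_k}| + (\text{length gained}) $, which one checks equals $(k+1)b - k$ using the slope of $\vn_{V_{v_k}}$ and the length $|V_{v_k}Y| = k-(k-1)b$ together with the constraint $kb-(k-1)>0$ ensuring the ray actually meets $OX$ rather than $OY$; (b) the nodal ray at the new $V$-vertex is $-\vn_{X_{v_k}} = -(2k-2,1)$, and applying $M_v$ to it (or directly reading off the geometry) gives $(-2(k+1),-1)=(-2k-2,-1)$; (c) $\vn_{X_{v_{k+1}}}$ is the $M_v$-image of $\vn_{X_{v_k}}$, which should come out to $(2k, 1)$; and (d) the edge direction $\ovr{X_{v_{k+1}}V_{v_{k+1}}}$ updates to $(2k+1,1)$. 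Each of these is a short linear-algebra computation with the explicit $2\times 2$ integer matrix $M_v$ (a transvection fixing the direction $\vn_V$), so I would do one generic step and then invoke induction.

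For part (ii), I would not induct again but simply apply a single $y$-mutation to the quadrilateral $Q_k$ whose data is now known from part (i). The $y$-mutation extends $\vn_{Y_{v_k}}=(1,-1)$ from the vertex $Y$ until it meets the side $X_{v_k}V_{v_k}$ (whose direction is $(2k-1,1)$) at a point $V_{yv_k}$, keeps $O$ and $Y$ fixed, and transforms by the mutation matrix $M_y$ associated to $\vn_Y$. The main computations are: the new side $|OY_y|$ grows by the length the ray $\vn_Y$ travels before hitting $X_{v_k}V_{v_k}$ — one solves the linear system $O + s(0,1) \in$ [the segment $X_{v_k}V_{v_k}$] or rather parametrizes along $\vn_Y$ from $Y$, and using $|OX_{v_k}| = kb-(k-1)$ obtains $|OY_y| = 1+k-kb$; the new vertex splits $X_{v_k}V_{v_k}$ into pieces of the claimed lengths $\frac{k+(1-b)k}{2k}$ and $\frac{k-(1+k)b}{2k}$, whose sum should recover $|X_{v_k}V_{v_k}| = 1-b$ (a useful consistency check: $\frac{2k - kb + k - kb - b}{2k}$... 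I would verify this identity carefully); the nodal direction at the new $V$-vertex is $-\vn_Y = (-1,1)$ as asserted; $\vn_{Y_y}$ is $M_y(\vn_{Y_{v_k}})$, expected to be $(1, -(2+2k))$; $\vn_{X_{v_k}}$ is unchanged because that vertex is untouched by a $y$-mutation; and the edge directions update to $\ovr{V_{yv_k}Y_y} = (-1, 1+2k)$ and $\ovr{X_{v_k}V_{yv_k}} = (2k-1,1)$ (the latter unchanged up to sign since it lies on the same line as before).

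The main obstacle I anticipate is not any single calculation but getting the mutation matrices and the "which opposite side does the extended nodal ray hit" conventions exactly right, and tracking affine (as opposed to Euclidean) lengths consistently — in particular verifying that under the standing hypothesis $kb-(k-1)>0$ the ray from $V$ in the $v$-mutation really does hit $OX$ and not $OY$ (this is the footnoted subtlety in the definition of the mutations), and similarly that the ray from $Y$ in the $y$-mutation hits $X_{v_k}V_{v_k}$. I would handle this by explicitly writing the intersection point as a solution of two linear equations and checking the relevant parameter lies in $[0,1]$ using the positivity constraint; this is exactly the kind of check deferred to Lemma~\ref{lem:yVec} in the main argument, so I would cross-reference it. Once the geometric set-up is pinned down, the algebra is linear and mechanical, and the stated formulas should fall out directly; the sum-consistency checks on the edge lengths (new pieces summing to the old side length) provide a built-in verification that the conventions were applied correctly.
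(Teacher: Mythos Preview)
Your overall strategy---induction on $k$ for part (i) using the mutation matrix, then a single $y$-mutation for part (ii)---is exactly what the paper does, and the length computations via the closing-up condition on the quadrilateral are the same idea. However, your bookkeeping of the nodal rays under mutation is backwards, and as written the steps (b), (c), and the corresponding $y$-mutation claims would not check out.

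Concretely: after a $v$-mutation the point $X_{v_{k+1}}$ lies on the extended ray $\vn_{V_{v_k}}$, so the new nodal direction there is simply $-\vn_{V_{v_k}}=(2k,1)$; it is \emph{not} obtained by applying $M_v$ to $\vn_{X_{v_k}}$. Conversely, the old vertex $X_{v_k}$ is carried by $M_v$ to the new $V$-vertex, so $\vn_{V_{v_{k+1}}}=M_v(\vn_{X_{v_k}})=(-2(k+1),-1)$; there is no preliminary negation. You have these two roles swapped. The same swap appears in part (ii): $M_y$ fixes $\vn_Y$ by definition, so $M_y(\vn_{Y_{v_k}})=\vn_{Y_{v_k}}=(1,-1)$, not $(1,-(2+2k))$. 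The correct formula is $\vn_{Y_y}=M_y(\vn_{V_{v_k}})$, while $\vn_{V_{yv_k}}=-\vn_Y=(-1,1)$. This is precisely the convention recorded just before Definition~\ref{def:QT} for the $x$-mutation (``the new nodal ray at $X_x$ is the image under $M_x$ of $\vn_V$, while the new nodal direction at $V_x$ is simply $-\vn_X$''), transported to the $v$- and $y$-cases. Once you correct this, the rest of your plan goes through and matches the paper's proof.
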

\begin{proof}
For (i), setting $k=0,$ we get the data for $Q_0$. 
We proceed by induction assuming the data holds for $k$ mutations. Note, the nodal ray $\vn_{V_{v_k}}$ will extend to hit the side $|OX_{v_k}|$ if and only if $\frac{k}{k+1}<b$. This holds by assumption as to complete the $(k+1)^{\text{st}}$ mutation, $\frac{k}{k+1}<b$. In performing such a $v$-mutation, it is easily verifiable from the definition that $\vn_{X_{v_{k+1}}}=-\vn_{V_{v_k}}$ and $|OY|$ remains constant. The monodromy matrix $M:=M_{V_{v_k}}$
is \[M=\begin{pmatrix} 1+2k & -4k^2 \\ 1 & 1-2k \end{pmatrix},\] i.e. $M\vn_{V_{v_k}}=\vn_{V_{v_k}}$ and $M\ovr{X_{v_k}V_{v_k}}=\ovr{V_{v_k}Y}$.
Then, we compute 
\begin{align*}
 M\begin{pmatrix} 1 \\ 0 \end{pmatrix}=\begin{pmatrix} 2k+1 \\ 1 \end{pmatrix}=\ovr{X_{v_{k+1}}V_{v_{k+1}}}, \quad \text{and} \quad M\vn_{X_{v_k}}=\begin{pmatrix} -2(1+k) \\ -1 \end{pmatrix}=\vn_{V_{v_{k+1}}}.
\end{align*}
To verify the formula for $|OX_{v_{k+1}}|,$ we use the fact that the vector sum of the sides of the
quadrilateral  $OX_{v_{k+1}}V_{v_k}Y$ is zero, which implies that for suitable $r>0$ we have
\[
(|OX_{v_{k+1}}|,0)+r(2k, 1)+(-|V_{v_k}Y|,0)+(0,- |OY|)=0.
\] 
Thus $r = |OY| = 1-b$, and since $|V_{v_k}Y|= k-(k-1)b$ we obtain 
$|OX_{v_{k+1}}|=(k+1)b-k$ as desired. Furthermore, as $M$ preserves the affine lengths, 
\begin{align*} |X_{v_{k+1}}V_{v_{k+1}}| & =|OX_{v_{k}}|-|OX_{v_{k+1}}|=1-b, \\ 
 |V_{v_{k+1}} Y|  & =  |V_{v_k}Y|+|X_{v_k}V_{v_k}|=(k+1)-kb.
 \end{align*}
This completes (i). 

For (ii), the assumption that $b<\frac{k}{k+1}$ implies that the nodal ray $\vn_{Y_{v_k}}$ will extend to hit the $X_{v_k}V_{v_k}$ edge of $Q_k$. The formulas in (ii) follow easily by computing the effect of one $y$-mutation on $Q_k$  using the data in (i). Details are left to the reader.
\end{proof}

  \begin{center}
 \begin{figure}
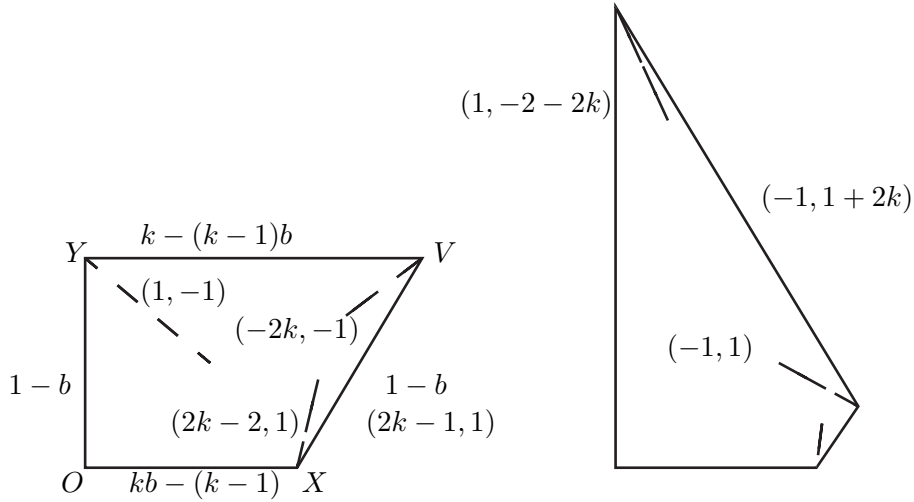

 \begin{overpic}[scale=1,unit=0.5mm]{vAndymut.pdf}
 	\put (-6,-6) {$O$}
	\put (58,-6) {$X$}
	\put (93,55) {$V$}
	\put (-5,55) {$Y$} 
 	\put (12,-6) {$kb-(k-1)$}
	\put (-20,20) {$1-b$}
	\put (15,60) {$k-(k-1)b$}
	\put (80,20) {$1-b$}
	\put (23,10) {$(2k-2,1)$} 
	\put (40,35) {$(-2k,-1)$}
	\put (75,10) {$(2k-1,1)$}
	\put (15,45) {$(1,-1)$}
	\put (155,30) {$(-1,1)$}
	\put (100,95) {$(1,-2-2k)$}
	\put (180,70) {$(-1,1+2k)$}
  \end{overpic}
  \vspace{5mm}
     \caption{To visualize the approximate proportions of Lemma~\ref{lem:vmut}, on the left is $v^kH_b$ and on the right is $yv^kH_b$. Note, depending on $b$, the nodal ray at $Y$ pointing in the $(1,-1)$ direction of $v^kH_b$ could extend to intersect $OX$ or $XV$. Therefore, in performing the mutation of $y$, we assume $k/(k+1) \leq b,$ to ensure $(1,-1)$ will intersect $XV$.}
     \label{fig:vyMut}
 \end{figure}
 \end{center} 

\begin{remark} \rm As pointed out by Tara Holm, by Lemma~\ref{lem:vmut} and Figure~\ref{fig:vyMut}, assuming $ \frac{k}{k+1}<b$, $v^kQ_b$ is a Delzant polygon, and hence corresponds to a toric action on $H_b$. This does not hold for the other mutations considered. 
\end{remark} 

  \section{The Result of Mutations}
The goal of this section is to compute the data of the decorated quadrilateral $wy^kv^{n+2}Q_b$ where $w$ is a finite word in $x,y$ for appropriate $b$-values. The main result of this section is Proposition~\ref{prop:triple}, which shows the quadrilaterals defined in Definition~\ref{def:QT} correspond to $wy^kv^{n+2}H_b$ with appropriate restrictions on $b.$

We begin with some basic observations about the mutations we perform. For ease of notation, we write all vectors in the statement of the lemma as row vectors rather than column vectors. 
 \begin{lemma} \label{lem:tMot} \begin{itemlist}
     \item[{\rm (i)}] Assume $Q$ is such that \[\vn_V=(q_0,-p_0), \quad \vn_Y=( q_1, -p_1 ), \quad \ovr{VY}=( 
     -q_1^2 , p_1q_1-1 ),\] and if 
     $ ( q_2 , -p_2):=M_Y\vn_V$, then  $M_Y\ovr{XV}=( -q_2^2 , p_2q_2-1).$ If $b$ is such that $yQ$ is defined, then we have
     \[ M_{Y_y}\vn_{V_y}=(q_3,-p_3), \quad \text{and} \quad 
     M_{Y_y}\ovr{XV_y}=(-q_3^2,p_3q_3-1),
     \] 
     and further, for $r=p,q$ and $i=1,2$, we have \[r_{i+1}=tr_i-r_{i-1}\]
     with recursion parameter $t=:p_1q_0-p_0q_1$.

    \item[{\rm (ii)}] Assume $Q$ is such that 
    \[\vn_V=-( p_0-6q_0 , q_0 ), \quad \vn_X=( p_1-6q_1 , q_1 ),  \quad \ovr{XV}=(
        1+p_1q_1-6q_1^2 , q_1^2 ),\] and if $( p_2-6q_2 , q_2):=M_X\vn_{V},$ then $M_X\ovr{VY}=(
        1+p_2q_2-6q_2^2 , q_2^2 )$. If $b$ is such that $xQ$ is defined, then 
     \[ M_{X_x}\ovr{V_yY}=(
        1+p_3q_3-6q_3^2 , q_3^2 ), \quad \text{and} \quad M_{X_X}\vn_{V_x}=( p_3-6q_3 , q_3 ),
     \] 
     and further, for $r=p,q$ and $i=1,2$, we have \[r_{i+1}=tr_i-r_{i-1}\]
    with recursion parameter $t=:p_0q_1-p_1q_0$. 
        \end{itemlist}
 \end{lemma}
 \begin{proof}
     Given the data of $\vn_Y$ and $\ovr{VY}$, we compute $M_Y=\begin{pmatrix}
         1-p_1q_1 & -q_1^2 \\ p_1^2 & 1+p_1q_1.
     \end{pmatrix}$
     Then, the statement that $(p_2,q_2)$ is determined by the recursive sequence follows by direct computation as
   \[ M_Y\begin{pmatrix} -q_0 \\ p_0 \end{pmatrix}=\begin{pmatrix} tq_1-q_0 \\ -(tp_1-p_0) \end{pmatrix}\]
   for $t:=p_1q_0-p_0q_1.$
   Then, the assumption about $M_Y(\ovr{XV})$ implies 
   \[ M_{Y_y}=\begin{pmatrix}
         1-p_2q_2 & -q_2^2 \\ p_2^2 & 1+p_2q_2
     \end{pmatrix}.\]
    Hence, the calculations above imply the statement about $M_{Y_y}\vn_{Y_y}$ holds (recall, 
   by definition of the $y$-mutation, $\vn_{V_y}=-\vn_Y)$. In doing a $y$-mutation $\ovr{XV_y}=\ovr{XV},$ therefore, the 
   statement about $M_{Y_y}\ovr{XV}$ can also be computed directly. 
For (ii), similar computations can be made. Alternatively, one can observe that the shear $S=\begin{pmatrix} 1 & 6 \\ 0 & 1 \end{pmatrix}$ takes
 \[S(\vn_X)=\begin{pmatrix}
     p \\ q
 \end{pmatrix}, \quad S(\ovr{XV})=\begin{pmatrix} 1+pq \\ q^2 \end{pmatrix}, \quad S\begin{pmatrix} 1 \\ 0 \end{pmatrix}=\begin{pmatrix} 1 \\ 0 \end{pmatrix}
 \]
 and use the results of (i). 
 \end{proof}

 The recursion parameter seen in Lemma~\ref{lem:tMot} has the following property:
  \begin{lemma}\label{lem:recurStay} 
    Let $t$ be an integer. For $k \geq 0$, given sequences $p_k,q_k$ where for some integer $t,$ the values $x=p,q$ are determined by
    \[x_{k+1}=tx_k-x_{k-1}, \] then 
    \[|p_kq_{k-1}-p_{k-1}q_k|=|p_1q_0-p_0q_1|.\]
\end{lemma}
\begin{proof} This is a direct computation:
    \[ -q_kp_{k-1}+p_kq_{k-1}=-(tq_{k-1}-q_{k-2})p_{k-1}+(tp_{k-1}-p_{k-2})q_{k-1}= -p_{k-2}q_{k-1}+p_{k-1}q_{k-2}.\]
\end{proof}

 Assuming $M_Y\ovr{XV}$ and $M_X\ovr{VY}$ satisfy the conditions of Lemma~\ref{lem:tMot}, the nodal rays and directions vectors of sequences of $x$ and $y$ mutations will be determined by recursive sequences given by recursion parameters of the form $|p_iq_{i-1}-p_{i-1}q_i|.$

\begin{example} \label{eg:genTrip} \rm 
Choose $b$ appropriately such that the mutations
$y^kv^2Q_b=:OXV_kY_k$ are well defined. By the computation of $yv^2Q_b$ in Lemma~\ref{lem:vmut}, we can check \[ \vn_{V_1}=\begin{pmatrix}
    -1 \\ 1
\end{pmatrix}, \quad \vn_{Y_1}=\begin{pmatrix}
    1 \\ -6
\end{pmatrix}, \quad \ovr{V_1Y_1}=\begin{pmatrix} -1 \\ 5 \end{pmatrix}, \quad \ \ovr{XY_1}=\begin{pmatrix} 3 \\ 1 \end{pmatrix}\] satisfy the assumptions of Lemma~\ref{lem:tMot}. Hence, by Lemma~\ref{lem:tMot}, we have
\[ \vn_X=\begin{pmatrix}
    2 \\ 1 
\end{pmatrix}, \quad \vn_{V_k}=\begin{pmatrix}
    -q_{k-1} \\ p_{k-1}
\end{pmatrix}, \quad \vn_{Y_k}=\begin{pmatrix} q_{k} \\ -p_{k} \end{pmatrix}, \quad
\ovr{V_kY_k}=\begin{pmatrix} -q_k^2 \\ p_kq_k-1 \end{pmatrix}, 
 \]
where for $r=p,q$, $r_{k+1}=5r_k-r_{k-1}$
with initial conditions given by $(p_1,q_1)=(6,1)$, $(p_0,q_0)=(1,1),$ and the recursion parameter is $p_1q_0-p_0q_1=5.$ Now, we consider performing an $x$-mutation to $y^kv^2Q_b$. The quadrilateral $y^kv^2Q_b$ has \[\vn_X=\begin{pmatrix}
    2 \\ 1 
\end{pmatrix}=\begin{pmatrix}
    \tilde{p}_1-6 \cdot \tilde{q}_1 \\ \tilde{q}_1
\end{pmatrix} \quad \text{for} \quad
(\tilde{p}_1,\tilde{q}_1)=(8,1).\]
The conditions in Lemma~\ref{lem:tMot}~(ii) about $\vn_{V_k}$ and $\ovr{V_kY_k}$ do not hold, but as we see later in Proposition~\ref{prop:triple}, the conditions of Lemma~\ref{lem:tMot}~(ii) do hold after performing one $x$-mutation. Therefore, the quadrilateral  $x^jy^kv^2Q_b:=OX_jV_{k,j}Y_k$ has nodal ray
\[ \vn_{X_j}=\begin{pmatrix} \tilde{p}_{k,j}-6\tilde{q}_{k,j} \\ \tilde{q}_{k,j} \end{pmatrix},
\]
where $(\tilde{p}_{k,0},\tilde{q}_{k,0})=(8,1),$ and for $\tilde r=\tilde p,\tilde q,$ we have 
\[\tilde r_{k,j+1}=t_{k}\tilde r_{k,j}-\tilde r_{k,j-1} , \quad \text{for} \quad t_k:=\tilde{p}_{k,0}\tilde{q}_{k,1}-\tilde{p}_{k,1}\tilde{q}_{k,0}. \]

For the conditions of Lemma~\ref{lem:tMot} (i) and (ii) to hold after interchanging between $x$ and $y$ mutations, the sequences $(p_k,q_k)$ and $(\tilde{p}_{k,j},\tilde{q}_{k,j})$ satisfy certain compatibility conditions. These compatibility conditions are outlined in Section~\ref{sec:compat}. 

\hfill$\er$
\end{example}

The affine lengths are also determined by  sequences of the same recursion parameters as the nodal rays with different initial conditions. As such, we consider tuples of four parameters $(d,m,p,q).$ We now define a mutation process on triples\footnote{This perspective of mutation on triples was first considered in \cite{MMW}.} of such tuples, which will correspond to the various recursive sequences depending on if we mutate by $x$ or $y$ as seen in Example~\ref{eg:genTrip}.

  \begin{definition} \label{def:mutTrip} For $\bullet=\la,\mu,\rho$, define the tuples $\bE_\bullet:=(d_\bullet,m_\bullet,p_\bullet,q_\bullet) \in \Z_{\geq 0}^4.$ If $\Tt:=(\bE_\la,\bE_\mu,\bE_\rho)$ is a triple of tuples, then we define the {\bf $x$-mutation of $\Tt$} to be the triple $x\Tt$ and the {\bf $y$-mutation of $\Tt$} to be the triple $y\Tt$ such that $x\Tt:=(\bE_\la,\bE_{x\mu},\bE_{\mu})$ and $y\Tt:=(\bE_\mu,\bE_{y\mu},\bE_\rho)$ where 
  \begin{equation}
    \label{eq:muttrip}
   \bE_{x\mu}:=t_\la \bE_\mu-\bE_\rho \quad \text{and} \quad \bE_{y\mu}:=t_\rho \bE_\mu-\bE_\la.
   \end{equation}
and subtraction and multiplication in \eqref{eq:muttrip} are performed term by term and \[t_\la:=|p_\rho q_\mu-p_\mu q_\rho|, \quad \text{and} \quad t_\rho:=|p_\mu q_\la-p_\la q_\mu|.\]
     \end{definition}
     Here the letters $\la,\mu, \rho$ stand for \lq left', \lq middle', and \lq right'. Lemma~\ref{lem:recurStay} implies that $t_\la$ and $t_\rho$ defined in Definition~\ref{def:mutTrip} are well defined with respect to $x$ and $y$ mutations. There is an abuse of notation as we have two notions of mutation with the same notation. If $w=x,y$, given a decorated quadrilateral $Q,$ $xQ$ refers to the mutation on $Q$ as defined in Definition~\ref{def:mutation}, while for a triple $\Tt$, $w\Tt$ refers to the mutation defined above. The motivation for this abuse of notation is to be seen in Proposition~\ref{prop:triple}. 

The triples we now define will determine $wyv^{n+2}Q_b$ where $w$ is some finite word of $x$ and $y.$ From considering the perspective of obstructions to symplectic embedded, these triples are also defined in \cite{MMW}. See Section~\ref{sec:obs} for more detail. 

 \begin{definition} \label{def:triples}
    Define the basic triples\footnote{The notation for the basic triples comes from continued fractions. For a tuple $(d,m,p,q),$ if the continued fraction of $p/q=[a_0,\hdots,a_k],$ then $\bE_{[a_0,\hdots,a_k]}$ denotes $(d,m,p,q).$ The relevance of continued fractions is seen in Section~\ref{sec:obs}.}: \[\Tt^*_n:=\left(\bE_{[2n+6]},\bE_{[2n+7,2n+4]},\bE_{[2n+8]}\right)\] where
    \begin{align*} 
    \bE_{[2n+6]}&:=(n+3,n+2,2n+6,1), \\
\bE_{[2n+7,2n+4]}&:=\left(2n^2+11n+14,2n^2+9n+9,4n^2+22n+29,2n+4\right).
    \end{align*} 
Additionally, we define the set of all mutations of the basic triples: 
 \[\Cc_n:=\{ \Tt \ | \ \Tt=w\Tt^*_n \ \text{where $w$ is a finite word of $x$ and $y$} \}.\]
\end{definition}

\begin{example} \rm For $n=0$, the basic triple is 
   \[ \Tt^*_0=\left((3,2,6,1),(14,9,29,4),(4,3,8,1)\right).\]
   Then, to compute $x\Tt^*_0$, we take
   \[ \bE_{x\mu}=|29\cdot 1-4\cdot 8|(14,9,29,4)-(4,3,8,1)=(38,24,79,11),\] and to compute $y\Tt^*_0,$ we take
   \[ \bE_{y\mu}=|6\cdot 4-29\cdot 1|(14,9,29,4)-(3,2,6,1)=(67,43,139,19).\] 
   Note, the numerics of consecutive $y$-mutations here correspond to the numerics in Example~\ref{eg:genTrip}. Using the notation of Example~\ref{eg:genTrip}, we have $y^kv^2Q_b:=OXV_kY_k.$ The triple $(3,2,6,1)$ has $p_1/q_1=6/1$ determining the slope of $\vn_{Y_1}$. 
   The recursion parameter is $|6\cdot 4-29\cdot 1|=5.$ Then, the triples $(14,9,29,4)$ and $(67,43,139,19)$ have
   $29/4=p_2/q_2$ and $139/19=p_3/q_3$ determining the slopes of $\vn_{Y_2}$ and $\vn_{Y_3}.$
   \hfill$\er$
   
\end{example}

\begin{remark} \label{rmk:unique} \rm By \cite[Prop2.2.2]{MMW}, each triple $\Tt \in \Cc_n$ besides $\Tt^*_n$ has a unique predecessor. In particular, for $\Tt \neq \Tt^*_n,$ there exists a unique $\Tt'$ such that $x\Tt'=\Tt$ or $y\Tt'=\Tt.$ Therefore, the nodal ray at $V$ in Definition~\ref{def:QT} is well defined. 
\end{remark}

We now define a decorated quadrilateral for each of the triples in $\Cc_n.$ As noted in Remark~\ref{rmk:unique}, each triple $\Tt$ has a unique predecessor $\Tt'$ such that either $\Tt=x\Tt'$ or $\Tt=y\Tt'$. Then, we refer to this predecessor as either $\Tt'=\ov{x}\Tt:=(\bE_\la,\bE_\rho,\bE_{\oxr})$ (resp. $\Tt'=\ov{y}\Tt:=(\bE_{\oyl},\bE_\la,\bE_\rho)$) if $\Tt=x\Tt'$ (resp. $\Tt=y\Tt'$). For ease of notation, given a tuple $(d,m,p,q)$, denote
\begin{equation}
    d'=d-3q, \quad \text{and} \quad m'=m-q. \label{eq:md'}
\end{equation}
\begin{definition} \label{def:QT}
Let $\Tt=(\bE_\la,\bE_\mu,\bE_\rho) \in \Cc_n$. Then, define the decorated quadrilateral $Q(\Tt):=OXVY$ to be given by the data: 
\begin{align} \label{eq:QT}
    |OY|&=\frac{d_\la-m_\la b}{q_\la}, \quad |OX|=\frac{m_\rho'b-d_\rho'}{q_\rho}, \\ \notag
    |VY|&=\frac{m_\rho'-d_\rho'b}{q_\la q_\mu}, \quad
    |XV|=\frac{m_\la-d_\la b}{q_\rho q_\mu}, \\ \notag
\vn_Y=\begin{pmatrix}
q_\la \\ -p_\la
\end{pmatrix}, \quad \vn_{\ovy V}&=\begin{pmatrix} -q_\oyl \\ p_\oyl
\end{pmatrix},
\quad \vn_X=\begin{pmatrix}
p_\rho-6q_\rho \\ q_\rho
\end{pmatrix},  \quad \vn_{\ovx V}=-\begin{pmatrix} p_{\oxr}-6q_{\oxr} \\ q_{\oxr}
\end{pmatrix}, \\ 
\notag
\ovr{OY}&=\begin{pmatrix}
0 \\ 1
\end{pmatrix}, \quad \quad \quad
\ovr{OX}=\begin{pmatrix}
1 \\ 0
\end{pmatrix},\\
\notag
\ovr{VY}&=\begin{pmatrix}
-q_\la^2 \\ q_\la p_\la-1
\end{pmatrix}, \quad
\ovr{XV}=\begin{pmatrix}
1+p_\rho q_\rho-6q_\rho^2 \\ q_\rho^2
\end{pmatrix},
\end{align} 
where $\vn_{\ovx V}$ (resp. $\vn_{\ovy V}$) is the nodal ray emanating from vertex $V$ if there is a $\Tt'$ such that $x\Tt'=\Tt$ (resp. $y\Tt'=\Tt.$)  For $\Tt=\Tt^*_n,$ we set $\vn_V=(-1,1)^T$.
Here $b\in (0,1)$ is a variable chosen so that all lengths are positive.  Thus we require
\begin{align}\label{eq:bsize}
d'_\rho/m'_\rho < b < \min\bigl(m_\la/d_\la, \ m'_\rho/d'_\rho\bigr),
\end{align}
where $m',d'$ are defined in terms of $m,d,q$  as in \eqref{eq:md'}.
\end{definition}

\begin{prop} \label{prop:triple}
\begin{itemlist}
 \item[{\rm (i)}]   Assuming $\frac{n+1}{n+2} \leq b \leq \frac{n+2}{n+3},$ then we have $Q(\Tt^*_n)=yv^{n+2}Q_b.$  

\item[{\rm (ii)}] Let $\Tt:=(\bE_\la,\bE_\mu,\bE_\rho) \in \Cc_n.$ Assume $b<m_\mu/d_\mu$, then given the decorated quadrilateral $Q(\Tt),$ the $x$ and $y$ mutations of the quadrilateral correspond to $x$ and $y$ mutations on the triples. In other words, 
\[ xQ(\Tt)=Q(x\Tt) \quad \text{and} \quad yQ(\Tt)=Q(y\Tt).\]
\end{itemlist}
\end{prop}
\begin{proof}
For (i), we consider $Q(\Tt^*_n).$ As defined in Definition~\ref{def:QT}, we set $\vn_V=(-1,1)^T.$ The other relevant numbers from $\Tt^*_n$ to determine $Q(\Tt^*_n)$ are:
    \begin{align*} (d_\la,m_\la,p_\la,q_\la)&=(n+3,n+2,2n+6,1) \\
    (d_\rho,m_\rho,p_\rho,q_\rho)&=(n+4,n+3,2n+8,1) \\
    q_\mu&=2n+4. 
    \end{align*} 
    Then, (i) follows from from comparing the data of Lemma~\ref{lem:vmut} for $k=n+2$ to the definition of $Q(\Tt^*_n).$
 
 For (ii),  we delay the necessary computations until Section~\ref{sec:computations}. It follows from Lemma~\ref{lem:xVec} and Lemma~\ref{lem:xnod} that $xQ(\Tt)=Q(x\Tt)$. These lemmas have no restrictions on $b$ as the nodal ray $\vn_X$ has positive slope and, hence, for any $b$ such that $Q(\Tt)$ is defined, $\vn_X$ extends to hit the side $\ovr{VY}.$ It follows from Lemma~\ref{lem:yVec} and Lemma~\ref{lem:ynod} that if $Q(\Tt)$ is well defined and $b<m_\mu/d_\mu$, then $yQ(\Tt)$ is well defined and further, $yQ(\Tt)=Q(y\Tt).$
\end{proof}

\begin{rmk} \rm \label{rmk:vmutation}
As noted in \cite[Rmk.2.1.14]{MMW}, we can define triples \[\Tt^n_{\ell,seed}:=\left((1,1,1,1,2),\bE_{[2n+6]},\bE_{[2n+8]}\right):=(\bE_\la,\bE_\mu,\bE_\rho).\] By \cite[Lem.3.2.1]{MM}, the class $(1,1,1,1,2)$ is called the seed class as 
 \[t_\rho \bE_\mu-\bE_\la=\bE_{[2n+7,2n+4]},\]
 so it serves as the class before $\bE_{[2n+6]}$ and $\bE_{[2n+7,2n+4]}$ in the recursive sequence. Hence, $y\Tt^n_{\ell,seed}=\Tt^*_{n}$. Furthermore, $x\Tt^n_{\ell,seed}=\Tt^{n-1}_{\ell,seed}.$ There is no $x$ or $y$ mutations bringing 
\[ \Tt^n_{\ell,seed} \mapsto \Tt^{n+1}_{\ell,seed}.\]
The $v$-mutation can be thought of as this mutation.  Iterations of the $v$-mutation can be thought of as moving from tuples $(d,m,p,q)$ with $p/q \in [6,8]$ to those with $p/q$ in $[2n+6,2n+8]$. From this perspective, in Lemma~\ref{lem:vmut}, we show that $Q(\Tt^n_{\ell,seed})=v^{n+2}Q_b$. Once we apply one more $y$-mutation to $v^{n+2}Q_b$, we get the $Q(\Tt^*_n)$ as $y\Tt^n_{\ell,seed}=\Tt^*_n.$ 
Furthermore, it turns out that for $yv^{n+2}Q_b$, $\vn_V = (-1,1)=(-q,p)=\vn_{\ovy V}$ corresponds to viewing $\Tt^*_n$ as the $y-$mutation of the quadrilateral $v^{n+2}Q_b$, which is associated to $\Tt^n_{\ell,seed},$ so this is consistent with Definition~\ref{def:QT} (to prove this association, compare Lemma \ref{lem:vmut} and Definition \ref{def:QT}). 
\end{rmk}

 \section{Properties of the triples in $\Cc_n$} \label{sec:compat} This section will state and prove various compatibility conditions about the triples in $\Cc_n$ in order to complete the computations for the proof of Proposition~\ref{prop:triple}~(ii) in Section~\ref{sec:computations}.
 
  In particular, as will be shown in Lemma~\ref{lem:compatCond} 
each tuple defined in $\Cc_n$ satisfies the following condition:

\begin{definition}
For a tuple $\bE:=(d,m,p,q) \in \N^4$, we call $\bE$ a {\bf Diophantine tuple} if 
\begin{equation} \label{eq:diophEq} 
    d^2-m^2=pq-1, \quad \text{and} \quad 3d-m=p+q.
    \end{equation} 
\end{definition} 

As first shown in \cite[Sec 2.2]{MM}, given a tuple $(d,m,p,q)$, the Diophantine conditions can be stated in an alternative way. This alternative perspective is especially useful for us as it will show how for a triple $(\bE_\la,\bE_\mu,\bE_\rho),$ the $t$-variable $t_\rho$ defined in \eqref{eq:muttrip} can be determined from the tuple $\bE_\rho.$
Namely, in \eqref{eq:diophEq}, the linear equation can be used to express $m$ as a function of $d,p,q$, and then substituting this into the quadratic equation, we get
\begin{equation}
    \label{eq:quadSurf}8d^2-6d(p+q)+p^2+3pq+q^2-1=0
\end{equation} with solution 
\[ d=\tfrac18\left(3(p+q) \pm \sqrt{p^2-6pq+q^2+8}\right).\] Thus, if we define
\begin{equation} \label{eq:tDef}
 t:=\sqrt{p^2-6pq+q^2+8}, \quad \eps:=\pm 1,
 \end{equation}
 then $(d,m)$ in the tuple can be expressed as
\begin{equation} \label{eq:formdm}
    d:=\tfrac18\left(3(p+q)+\eps t\right), \quad m:=\tfrac18\left((p+q)+3\eps t\right). 
\end{equation}

In other words, following \cite[Sec.3.1]{MM}, given the appropriate choice of $\eps,$ we can think of a Diophantine tuple as an integer point on the quadratic surface $X$ defined by \eqref{eq:quadSurf} with coordinates $(d,p,q)$ or $(p,q,t)$. In $(p,q,t)$ coordinates, we can easily write $X$ in matrix notation: setting
\[ A:=\begin{pmatrix} -1 & 3 & 0 \\ 3 & -1 & 0 \\ 0 & 0 & 1 \end{pmatrix}, \quad \bx:=\begin{pmatrix} p \\ q \\ t \end{pmatrix},\]
we have
$X=\{\bx \ | \ \bx^T A \bx=8\}.$

By \cite[Lem.3.1.2]{MM}, given two initial integral points $(p_0,q_0,t_0)$ and $(p_1,q_1,t_1)$ in $X$, these points can be extended to a sequence of points $(p_k,q_k,t_k) \in X$ for all $k \geq 0$ defined by
\[ (p_{k+1},q_{k+1},t_{k+1})=t(p_k,q_k,t_k)-(p_{k-1},q_{k-1},t_{k-1})\]
where $t>0$ if $(p_0,q_0,t_0)$ and $(p_1,q_1,t_1)$ are $t$-compatible: 

\begin{definition} [\cite{MMW}]
Two tuples $(p_0,q_0,t_0)$ and $(p_1,q_1,t_1)$ are called $t$-compatible if 
    \begin{equation} \label{eq:compatDef}
    t_0t_1-4t=p_0p_1-3(p_0q_1+q_0p_1)+q_0q_1, \quad \text{i.e} \quad \bx^TA\bx'=4t.
    \end{equation}
\end{definition}

Recall, from Definition~\ref{def:mutTrip}, the recursion parameter for two seeds $(p_0,q_0,t_0)$ and $(p_1,q_1,t_1)$ are of the form $|p_1q_0-p_0q_1|.$ This motivates: 

\begin{lemma}
    If two tuples $(p_0,q_0,t_0)$ and $(p_1,q_1,t_1)$ are $t$-compatible where $t=|p_1q_0-p_0q_1|,$ then after renaming so that $p_0/q_1<p_1/q_1$ (if necessary) the following relation holds: 
    \begin{equation} \label{eq:adjDef}
        (p_0+q_1)(p_1+q_1)-t_0t_1=8p_0q_1.
    \end{equation}
\end{lemma}
\begin{proof}
    Substituting in $t=|p_1q_0-p_0q_1|$ to the left hand side of \eqref{eq:compatDef}, we get the desired result. 
\end{proof}

\begin{definition} [\cite{MMW}]
    Two tuples $(p_0,q_0,t_0)$ and $(p_1,q_1,t_1)$ are {\bf adjacent} if after renaming so that $p_0/q_0<p_1/q_1$ (if necessary) \eqref{eq:adjDef} holds. 
\end{definition}

 We now give two different lemmas about the conditions that the triples in $\Tt \in \Cc_n$ satisfy. These lemmas will be used to prove  the computations in Section~\ref{sec:computations} for
 Proposition~\ref{prop:triple}. The first lemma is shown in \cite{MMW}, and the second lemma we prove assuming the first. 
\begin{lemma} \label{lem:compatCond}
    Any triple $\Tt \in \Cc_n$ satisfies the following conditions:
    \begin{itemlist}
        \item[{\rm (i)}]  All three tuples in $\Tt$ are Diophantine tuples,
        \item[{\rm (ii)}] $\bE_\la,\bE_\mu$ are adjacent,
        \item[{\rm (iii)}] $\bE_\la,\bE_\mu$ are adjacent and $t_\rho$-compatible, i.e. \[t_\rho=\sqrt{p_\rho^2+q_\rho^2-6p_\rho q_\rho+8}=p_\mu q_\la-p_\la q_\mu,\]
        \item[{\rm (iv)}] $\bE_\rho,\bE_\mu$ are adjacent and $t_\la$-compatible, i.e. \[t_\la=\sqrt{p_\la^2+q_\la^2-6p_\la q_\la+8}= p_\rho q_\mu-p_\mu q_\rho,\]
        \item[{\rm (v)}] $t_\la t_\rho-t_\mu=p_\rho q_\la -p_\la q_\rho,$
        \item[{\rm (vi)}] $m_\rho/d_\rho,m_\la/d_\la>m_\mu/d_\mu$.
    \end{itemlist}
\end{lemma}
\begin{proof}
    These are the conditions of a generating triple defined in \cite[Def.2.1.6]{MMW}. The only difference between this statement and the definition of a generating triple is condition (vi) states 
    $\acc(m_\rho/d_\rho),\acc(m_\la/d_\la)>\acc(m_\mu/d_\mu).$ 
     This is equivalent to (iv) in this lemma because in our case, the ratio of $m_\bullet/d_\bullet>1/3,$ and $\acc(b)$ preserves orientation for $b>1/3.$
    By \cite[Prop.2.1.9]{MMW}, if a triple $\Tt$ is a generating triple, then $x\Tt$ and $y\Tt$ are also generating triples. Therefore, to show the lemma holds for all $\Tt \in \Cc_n,$ it suffices to check it holds for the base triples $\Tt^*_n.$ This is done in \cite[Example 2.1.7]{MMW}.
 \end{proof}

\begin{lemma} \label{lem:exident} 
For a triple $(\bE_\la,\bE_\mu,\bE_\rho) \in \Cc_n$ the following identities hold:\footnote{While here we specifically refer to triples in $\Cc_n$, this lemma implies more generally to generating triples as defined in \cite[Def.2.1.6]{MMW}}
\begin{itemize}  \item[{\rm (i)}] $p_\la+q_\la=q_\mu t_\rho-q_\rho t_\mu$ and $7p_\la-q_\la=p_\mu t_\rho-t_\mu p_\rho$
\item[{\rm (ii)}] $p_\rho+q_\rho=p_\mu t_\la-p_\la t_\mu$ and $p_\rho-7q_\rho=q_\la t_\mu-q_\mu t_\la$
\item[{\rm (iii)}] $p_\mu+q_\mu=q_\rho t_\la+p_\la t_\rho$, $7p_\mu-q_\mu=6 p_\la t_\rho+p_\rho t_\la-q_\la t_\rho$, and \newline $7q_\mu-p_\mu=6q_\rho t_\la+q_\la t_\rho-p_\rho t_\la$
\item[{\rm (iv)}] $p_\la(p_\rho-6q_\rho)+q_\la q_\rho=t_\mu$
\item[{\rm (v)}] $q_\la t_\la+q_\rho t_\rho+q_\mu t_\mu=q_\mu t_\la t_\rho$
\item[{\rm (vi)}] $t_\la\begin{pmatrix} 1+p_\mu q_\mu-6q_\mu^2 \\ q_\mu^2 \end{pmatrix}=q_{x\mu}\begin{pmatrix} p_\mu-6q_\mu \\ q_\mu \end{pmatrix}+q_\mu \begin{pmatrix} p_\rho-6q_\rho \\ q_\rho \end{pmatrix}$
\item[{\rm (vii)}] $-t_\rho \begin{pmatrix} -q_\mu^2 \\ q_\mu p_\mu-1 \end{pmatrix}=q_{y\mu}\begin{pmatrix} q_\mu \\ -p_\mu \end{pmatrix}+q_\mu \begin{pmatrix} q_\la \\ -p_\la \end{pmatrix}$
\end{itemize}
\end{lemma}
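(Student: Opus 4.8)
\textbf{Proof proposal for Lemma~\ref{lem:exident}.}

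The plan is to treat all seven identities as polynomial consequences of the five defining relations of a generating triple $\Tt=(\bE_\la,\bE_\mu,\bE_\rho)$ in Definition~\ref{def:gentr}, together with the Diophantine relations \eqref{eq:dioph} satisfied by each class. The starting data are: adjacency of $\bE_\la,\bE_\rho$ (relation \eqref{eq:adjac}), the two adjacency-plus-compatibility statements giving $t_\rho=q_\la p_\mu-p_\la q_\mu$ and $t_\la=q_\mu p_\rho-p_\mu q_\rho$ (these come from Lemma~\ref{lem:recuradj}(ii) applied to conditions (b),(c)), condition (d) $t_\la t_\rho-t_\mu=q_\la p_\rho-p_\la q_\rho$, and, for each of the three classes $\bE_\nu$, the formula \eqref{eq:formdm} expressing $(d_\nu,m_\nu)$ and $t_\nu^2=p_\nu^2-6p_\nu q_\nu+q_\nu^2-8$. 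First I would record these as the ``axioms'' and note that (i)--(iii) are all statements of the same type: each asserts that a $2$-vector built from the coordinates of one class equals a linear combination (with coefficients among the $t_\nu$) of the analogous $2$-vectors of the other two classes.

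The key observation that organizes the whole proof is that the three ``cross-ratio'' quantities $t_\la=q_\mu p_\rho-p_\mu q_\rho$, $t_\rho=q_\la p_\mu-p_\la q_\mu$, and $q_\la p_\rho - p_\la q_\rho$ are the three $2\times 2$ minors of the $2\times 3$ matrix $\begin{pmatrix} p_\la & p_\mu & p_\rho \\ q_\la & q_\mu & q_\rho\end{pmatrix}$. Hence, for any row vector $(a,b)$, the vector $(a p_\mu + b q_\mu,\, a p_\la + b q_\la,\, a p_\rho + b q_\rho)$ lies in the kernel of this matrix after an appropriate sign pattern, which gives Cramer-type identities of exactly the shape in (i)--(iii). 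Concretely, for (i) I would apply this with $(a,b)=(1,1)$ and with $(a,b)=(7,-1)$ and then use condition (d), $q_\la p_\rho - p_\la q_\rho = t_\la t_\rho - t_\mu$, to replace the largest minor; the resulting identity is $p_\la+q_\la = q_\mu t_\rho - q_\rho t_\mu$ after also invoking $t_\rho = q_\la p_\mu - p_\la q_\mu$, and similarly for the $(7,-1)$ combination. Identities (ii) and (iii) follow by the same manoeuvre with the roles of $\la,\mu,\rho$ permuted, reading off which minor plays the role of ``$t$'' in each case. The second halves of (i),(ii),(iii), involving the coefficient $7$ (or $6$), are where the affine-perimeter relation $3d_\nu-m_\nu=p_\nu+q_\nu$ enters: I would substitute $m_\nu = 3d_\nu - p_\nu - q_\nu$ (or equivalently use \eqref{eq:formdm}) to convert the ``$7$'' and ``$6$'' coefficients into combinations that match the Cramer identities.

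For (iv), the quantity $p_\la(p_\rho-6q_\rho)+q_\la q_\rho$ should be rewritten as $p_\la p_\rho - 6 p_\la q_\rho + q_\la q_\rho$; using $t$-compatibility of $\bE_\la,\bE_\mu$ (which by Lemma~\ref{lem:recuradj}(ii) and condition (b) reads $p_\la p_\mu - 3(p_\la q_\mu + q_\la p_\mu) + q_\la q_\mu = t_\la t_\mu - 4 t_\rho$ if one applies \eqref{eq:tcompat}; more directly condition (b) says $t_\rho = q_\la p_\mu - p_\la q_\mu$) together with condition (d) and the analogous compatibility $t_\la = q_\mu p_\rho - p_\mu q_\rho$, the expression collapses to $t_\mu$; I expect this to be a two- or three-line manipulation once the right substitution for $q_\la p_\rho - p_\la q_\rho$ is made. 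Identity (v), $q_\la t_\la + q_\rho t_\rho + q_\mu t_\mu = q_\mu t_\la t_\rho$, rearranges to $q_\la t_\la + q_\rho t_\rho = q_\mu(t_\la t_\rho - t_\mu)$, and by condition (d) the right side is $q_\mu(q_\la p_\rho - p_\la q_\rho)$; substituting $t_\la = q_\mu p_\rho - p_\mu q_\rho$ and $t_\rho = q_\la p_\mu - p_\la q_\mu$ on the left and expanding, both sides become the same degree-three polynomial in the six coordinates --- a direct expansion. Finally, (vi) and (vii) are vector identities: their second components are $t_\la q_\mu^2 = q_{x\mu} q_\mu + q_\mu q_\rho$, i.e. $t_\la q_\mu = q_{x\mu} + q_\rho$, which is exactly the defining recursion $q_{x\mu} = t_\la q_\mu - q_\rho$ from \eqref{eq:muttrip} (and dually $q_{y\mu} = t_\rho q_\mu - q_\la$ for (vii)); the first components, $t_\la(1 + p_\mu q_\mu - 6 q_\mu^2) = q_{x\mu}(p_\mu - 6 q_\mu) + q_\mu(p_\rho - 6 q_\rho)$, reduce after substituting $q_{x\mu} = t_\la q_\mu - q_\rho$ to $t_\la(1 + p_\mu q_\mu - 6q_\mu^2) = t_\la q_\mu p_\mu - 6 t_\la q_\mu^2 - q_\rho p_\mu + 6 q_\rho q_\mu + q_\mu p_\rho - 6 q_\mu q_\rho$, i.e. $t_\la = q_\mu p_\rho - p_\mu q_\rho$, which is precisely condition (c); (vii) is handled symmetrically using condition (b) and the $p$-recursion $p_{y\mu} = t_\rho p_\mu - p_\la$.

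The main obstacle I anticipate is purely bookkeeping rather than conceptual: keeping straight which of the three minors plays the role of which $t_\nu$ under the various cyclic relabellings in parts (i)--(iii), and correctly converting the coefficient ``$7$'' (coming from $B$ in Remark~\ref{rmk:symmA}, or equivalently from $3d-m=p+q$) into the Cramer framework without sign errors. I would mitigate this by first proving a single ``master lemma'': for any integers $a,b$, the triple $\bigl(a p_\mu + b q_\mu,\ a p_\la + b q_\la,\ a p_\rho + b q_\rho\bigr)$ satisfies the linear relation with coefficients $(q_\la p_\rho - p_\la q_\rho,\ -(q_\mu p_\rho - p_\mu q_\rho),\ q_\la p_\mu - p_\la q_\mu) = (t_\la t_\rho - t_\mu,\ -t_\la,\ t_\rho)$ --- then all of (i), (ii), (iii) are immediate specializations, and (iv)--(vii) are short direct expansions against the axioms.
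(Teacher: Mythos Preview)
Your treatment of (v), (vi), and (vii) is correct and essentially matches the paper: the second component of (vi) is the recursion $q_{x\mu}=t_\la q_\mu-q_\rho$, and the first component reduces exactly to the $t_\la$-compatibility minor $t_\la=q_\mu p_\rho-p_\mu q_\rho$; (vii) is symmetric; and (v) rearranges to $q_\la t_\la+q_\rho t_\rho=q_\mu(t_\la t_\rho-t_\mu)$, which after substituting the two minors and condition (d) is a direct polynomial identity. Your observation that this is the Cramer relation for the $2\times3$ matrix $\bigl(\begin{smallmatrix}p_\la&p_\mu&p_\rho\\q_\la&q_\mu&q_\rho\end{smallmatrix}\bigr)$ is a nice way to see (v) and its $p$-analogue simultaneously; the paper instead first solves for $p_\mu,q_\mu$ from the two compatibilities and then rearranges, which amounts to the same thing.

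The gap is in (i)--(iii). Your ``master lemma'' is a single vector relation,
\[
(t_\la t_\rho-t_\mu)\,(p_\mu,q_\mu)\;=\;t_\la\,(p_\la,q_\la)+t_\rho\,(p_\rho,q_\rho),
\]
and specializing it to $(a,b)=(1,1)$ only gives $t_\la(p_\la+q_\la)+t_\rho(p_\rho+q_\rho)=(t_\la t_\rho-t_\mu)(p_\mu+q_\mu)$. If you try to extract $p_\la+q_\la=q_\mu t_\rho-q_\rho t_\mu$ from this together with (d) and the minor formulas for $t_\la,t_\rho$, the computation is circular: after substituting (v) and its $p$-analogue back in, you return to the statement you are trying to prove. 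The Cramer relation, together with (d) and the two minor identities, simply does not contain enough information; it uses the adjacency relations \eqref{eq:adjac} only through Lemma~\ref{lem:recuradj}(ii), i.e.\ only to identify $|p'q-pq'|$ with the appropriate $t$, and discards the rest of the quadratic content of adjacency. Identities (i)--(iii) genuinely need that quadratic content. The paper's route is to write the \emph{full} adjacency of $\bE_\la$ with $\bE_\rho$ and with $\bE_\mu$ as two expressions for $p_\la+q_\la$, equate them to solve for $p_\la$ (using $t_\la$-compatibility to simplify the denominator), then feed that back in to solve for $q_\la$; the combinations $p_\la+q_\la$ and $7p_\la-q_\la$ then drop out. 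For the same reason, your plan for (iv) should invoke the $\bE_\la,\bE_\rho$ adjacency (condition (a)) together with (d), not the compatibilities (b),(c): expanding $(p_\la+q_\la)(p_\rho+q_\rho)-t_\la t_\rho=8p_\la q_\rho$ and replacing $t_\la t_\rho$ via (d) gives (iv) in two lines.
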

\begin{proof}
In this proof, we freely assume all of the adjacency and compatibility conditions stated in Lemma~\ref{lem:compatCond}. 
Note that the formulas in (i), (ii) and (iii) are similar, expressing linear combinations of the $p,q$ coordinates of  one element in the triple in terms 
of the $p,q,t$-coordinates of the other two elements.  Their proofs are very similar, and we only carry out the details for (i).

To prove (i), we use directly \eqref{eq:adjDef} as $\bE_\la$ is adjacent to both $\bE_\rho$ and $\bE_\mu$ to get
\[
p_\la+q_\la=\frac{8 p_\la q_\rho+t_\la t_\rho}{p_\rho+q_\rho}=\frac{8 p_\la q_\mu +t_\la t_\mu}{p_\mu+q_\mu}.
\]
Then, by using the two different expressions above for $p_\la+q_\la$, we solve for $p_\la$ and get
\[ 
p_\la=\frac{t_\la(p_\mu t_\rho-p_\rho t_\mu-q_\rho t_\mu+q_\mu t_\rho)}{8(p_\rho q_\mu-p_\mu q_\rho)}=\tfrac{1}{8}(p_\mu t_\rho-p_\rho t_\mu-q_\rho t_\mu+q_\mu t_\rho),
\] 
where  the second equality comes from $t_\la$-compatibility.
Now, we can substitute this expression for $p_\la$ into the identity for the $\bE_\la$ and $\bE_\mu$ adjacency and solve for $q_\la$ to get
\[ 
q_\la=\frac{t_\la(p_\rho t_\mu -7q_\rho t_\mu -p_\mu t_\rho +7q_\mu t_\rho )}{8(p_\rho q_\mu -p_\mu q_\rho) }=\tfrac{1}{8}(t_\mu (p_\rho -7q_\rho )-t_\rho (p_\mu -7q_\mu )).
\]
Then, by considering $p_\la+q_\la$ and $7p_\la-q_\la$ we get the desired identities in (i).
Further details of (ii),(iii) are left to the reader.

For (iv), note, as $\bE_\la$ and $\bE_\rho$ are adjacent, by definition in \eqref{eq:adjDef}, we have
\[ (p_\la+q_\la)(p_\rho+q_\rho)-8p_\la q_\rho=t_\la t_\rho.\]
If we substitute the left hand side equal $t_\la t_\rho$ into $ t_\la t_\rho-t_\mu=p_\rho q_\la-p_\la q_\rho,$ which holds by Lemma~\ref{lem:compatCond}~(v), we obtain the desired equation
\[ t_\mu=p_\la p_\rho-6p_\la q_\rho +q_\la q_\rho.\]

For (v), using $t_\la$- and $t_\rho$-compatibility, we solve for $q_\mu$ and $p_\mu$, and then simplify using the condition $t_\la t_\rho-t_\mu=p_\rho q_\la-p_\la q_\rho$ to get
\begin{equation} \label{eq:pmuqmu}
 q_\mu=\frac{q_\la t_\la+q_\rho t_\rho}{t_\la t_\rho-t_\mu}, \quad p_\mu=\frac{p_\la t_\la+p_\rho t_\rho}{t_\la t_\rho-t_\mu} .
 \end{equation}
 Using this expression for $q_\mu$, we conclude that 
\[
     q_\la t_\la+q_\rho t_\rho+q_\mu t_\mu=q_\mu t_\la t_\rho\]
     as desired.

For (vi), for the second coordinate, we have 
$q_\mu t_\la q_\mu=(q_{x\mu}+q_\rho)q_\mu$, which follows by the recursion as $q_{x\mu}=t_\la q_\mu-q_\rho.$ For the first coordinate, the terms divisible by $6q_\mu$
following by same identity, so it remains to check
\[t_\la(1+p_\mu q_\mu)=q_{x \mu}p_\mu+q_\mu p_\rho=p_\mu(t_\la q_\mu-q_\rho)+q_\mu p_\rho \iff t_\la=q_\mu p_\rho-q_\rho p_\mu.\]
But this holds by $t$-compatibility. 
We leave the details of (vii) to the reader as it follows similarly to (vi). 
\end{proof}

  \section{Proof of Main Theorem} \label{sec:MainProof}

   In this section, we prove Theorem~\ref{thm:Main1}. 
   Assuming we have chosen $b$ as defined in Theorem~\ref{thm:Main1}, then for all $0<\eps<1$, we aim to construct an embedding
\begin{equation} \label{eq:embedGoal} (1-\eps)E(1,\acc(b)) \sembeds V_b(\acc(b)) H_b . \end{equation}

Denote the quadrilateral $OX_kV_kY_k:=y^kwyv_{n+2}Q_b$. As $OX_kV_kY_k$ is scaled to give embeddings into $H_b$, \eqref{eq:embedGoal} is equivalent to
\begin{equation} \label{eq:embedScaled} (1-\eps) E\left(\frac{1}{V_b(\acc(b))},\frac{\acc(b)}{V_b(\acc(b))}\right) \sembeds H_b. \end{equation}

For $\eps=1$, the embedding \eqref{eq:embedScaled} is a full filling, which implies we expect the limit of $\lim_{k \to \infty}OX_kV_kY_k$ to be a triangle. 
Following \cite[Prop. 2.27]{AADT}, for the limit of $\lim_{k \to \infty}OX_kV_kY_k$ to correspond to the embedding \eqref{eq:embedScaled}, the following conditions must hold: 
\begin{itemlist} 
	\item[{\rm (i)}] As $|OX_k|$ is constant in $k$, for all $k \geq 0$, we must have
$|OX_k|=\frac{1}{V_b(\acc(b))}.$ 
	\item[{\rm (ii)}] The limit $\lim_{k \to \infty} |OY_k|=
 \frac{\acc(b)}{V_b(\acc(b))}$. 
	\item[{\rm (iii)}] The limit $\lim_{k \to \infty} |X_kV_k|=0$. 
	\item[{\rm (iii)}] The limits of the nodal rays $\lim_{k \to \infty} \vn_{y_k}$ and $\lim_{k \to \infty} \vn_{v_k}$ have slopes $\pm \acc(b)$.
\end{itemlist}

The next lemma is used to show these properties hold for the quadrilaterals in Definition~\ref{def:QT}. It is a culmination of many results of Bertozzi, et al. found in \cite{ICERM} and generalizations of these results by the author and McDuff found in \cite{MM}. Here, we give the relevant references for the interested reader. The lemma can be visualized in Figure~\ref{fig:lowerBoundClasses}.

 \begin{center}
 \begin{figure}
 \begin{overpic}[scale=0.75,unit=0.5mm]{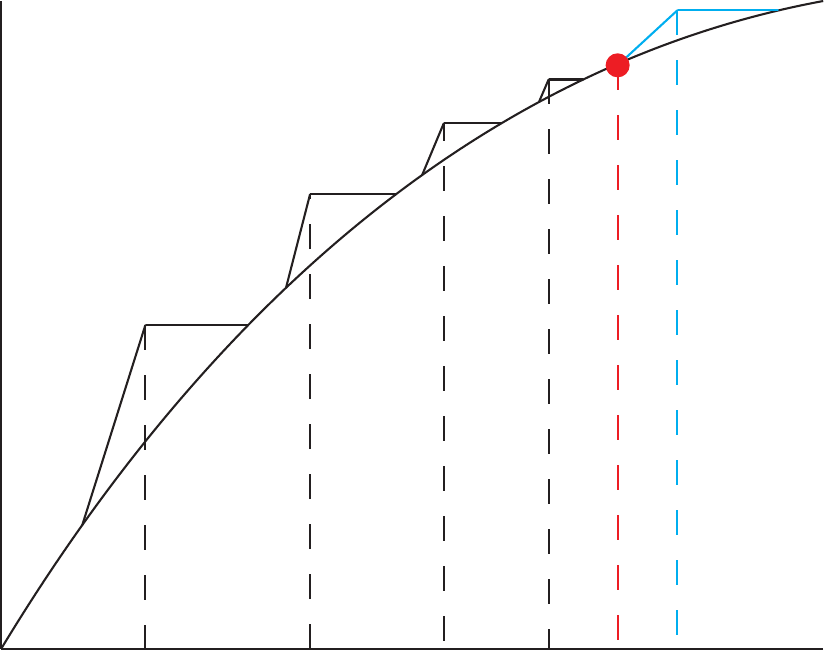}
 	\put (34,-10) {$\frac{p_0}{q_0}$}
	\put (77,-10) {$\frac{p_1}{q_1}$}
	\put (110,-10) {$\frac{p_2}{q_2}$}
	\put (122,-10) {$\hdots$}
	\put (137,-10) {$\frac{p_k}{q_k}$}
	\put (150,-10) {{\color{red} $\acc(b)$}}
	\put (170,-10) {\color{cyan} {$\frac{p}{q}$}}
	\put (40,89) {$\frac{p_0}{d_0-m_0b}$}
	\put (80,122) {$\frac{p_1}{d_1-m_1b}$}
	\put (110,140) {$\frac{p_2}{d_2-m_2b}$}
	\put (130,150) {$\frac{p_k}{d_k-m_kb}$}
	\put (148,160) {{\color{cyan} $\frac{qz}{d-mb}$}}
	\put (173,167) {{\color{cyan} $\frac{p}{d-mb}$}}
	\put (210,165) {$V_b(\acc(b))$}
	\put (212,-1) {$z$}
  \end{overpic}
  \vspace{5 mm}
     \caption{The following cartoon illustrates a lower bound for $c_b(z)$ where there is a sequence of tuples $(d_k,m_k,p_k,q_k)$ along with a tuple $(d,m,p,q)$ satisfying the conditions of Lemma~\ref{lem:LimitMot} where $b:=\acc(\lim_{k \to \infty} \frac{m_k}{d_k}))$.}
     \label{fig:lowerBoundClasses}
 \end{figure}
 \end{center} 

\begin{lemma} \label{lem:LimitMot} Let $\{(d_k,m_k,p_k,q_k)\}_{k \geq 0}$ be a sequence of Diophantine tuples such that 
\[ (d_{k+1},m_{k+1},p_{k+1},q_{k+1})=t(d_k,m_k,p_k,q_k)-(d_{k-1},m_{k-1},p_{k-1},q_{k-1})\]
for $t\geq 3.$ Assume there is a Diophantine tuple $(d,m,p,q)$ such that for all $k$, $(d,m,p,q)$ is adjacent to $(d_k,m_k,p_q,q_k).$
 Denote 
\[  z_\infty:=\lim_{k \to \infty} \frac{p_k}{q_k}, \quad \text{and} \quad b_\infty:=\lim_{k \to \infty} \frac{m_k}{d_k}\]
Then, 
\begin{itemlist}
	\item[{\rm (i)}]  The limits $z_\infty$ and $b_\infty$ exist and have the property that $\acc(b_\infty)=z_\infty$. 
	\item[{\rm (ii)}] $\lim_{k \to \infty} \frac{d_k-m_kb_\infty}{q_k}=\frac{\acc(b_\infty)}{V_{b_\infty}(z_\infty)}$.
	\item[{\rm (iii)}] $V_{b_\infty}(z_\infty)=\frac{qz_\infty}{d-mb_\infty}=\frac{q}{b_\infty(m-q)-(d-3q)}$
\end{itemlist} 
\end{lemma}

\begin{proof}
For (i), \cite[Lemma 3.1.4]{MM} states the limits exist and computes the limits directly in terms of the first two tuples $k=0,1$ by solving the recursion. Then, following \cite[Proposition 41]{ICERM},  if $\lim_{k \to \infty} \frac{p_k}{q_k}=\frac{P}{Q}$ and $\lim_{k \to \infty} \frac{m_k}{d_k}=\frac{M}{D},$ then $\acc(M/D)=P/Q$ exactly if
\[\frac{P+Q}{\sqrt{PQ}}=\frac{3D-M}{\sqrt{D^2-M^2}}.\] By \cite[Lemma 3.1.4]{MM}, this equality is indeed satisfied. This completes (i). 
 
For (ii), as $\lim_{k \to \infty} \frac{p_k}{q_k}=\acc(b_\infty)$, it suffices to show the second equality:
\[ \lim_{k \to \infty} \frac{p_k}{q_k |OY_k|}=\lim_{k \to \infty} \frac{p_k}{d_k-m_kb_\infty}=V_{b_\infty}(z_\infty).\]  By \cite[Lemma 16]{ICERM}, for each tuple $(d_k,m_k,p_k,q_k)$ and any $b$, there is an $\eps>0$ such that for $z \in (\tfrac{p_k}{q_k},\tfrac{p_k}{q_k}+\eps)$, the following lower bounds holds: \[ \mu_{\bE_k,b}(z):=\frac{p_k}{d_k-m_kb} \leq c_b(z).\]

Set $b_k:=\acc^{-1}(\tfrac{m_k}{d_k})$ where we choose the upper inverse if $b_\infty >\frac{1}{3}$ and the lower inverse otherwise. Then, \cite[Proposition 49 (ii)]{ICERM} states that for $z \in (\frac{p_k}{q_k},\frac{p_k}{q_k}+\eps)$, for large enough $k$,
\[ V_{b_k}(z) \leq \frac{p_k}{d_k-m_kb_k}.\] Further, by \cite[Lemma 15 (i)]{ICERM}, the right most inequality also holds
\[ V_{b_k}(z) \leq \frac{p_k}{d_k-m_kb_k}  \leq V_{b_k}(z)\sqrt{1+1/(d_k^2-m_k^2)}.\]
As these are Diophantine classes $d_k^2-m_k^2=p_kq_k-1$, and as $k \to \infty$, $p_kq_k-1 \to \infty.$ By the continuity of the function $(z,b) \mapsto c_b(z)$ and as $b_\infty=\lim_{k \to \infty}b_k$ and $z_\infty=\lim_{k \to \infty} \frac{p_k}{q_k}$, we conclude  
\[ \lim_{k \to \infty} \frac{p_k}{d_k-m_kb_\infty} =V_{b_\infty}(z_\infty)\] 
as desired. 

For (iii), by \cite[Lemma 2.2.7]{MM}, the graph of the function $z \mapsto \frac{1+z}{3-b_\infty}$ passes through the point $(\acc(b_\infty),V_{b_\infty}(\acc(b_\infty)))$. Further, by the assumption that $(d,m,p,q)$ is adjacenct to $(d_k,m_k,p_k,q_k)$, \cite[Theorem 52]{ICERM} implies\footnote{In the language of \cite{ICERM}, the tuple $(d,m,p,q)$ is the blocking class associated to the sequence of tuples.} the function $z \mapsto \frac{qz}{d-mb_\infty}$ also passes through the point $(\acc(b_\infty),V_{b_\infty}(\acc(b_\infty)))$. Hence, 
\[ \frac{1+\acc(b_\infty)}{3-b_\infty}=\frac{qz_\infty}{d-mb_\infty} \]
This implies that
\[ d-mb_\infty=z_\infty(b_\infty(m-q)-(d-3q)).\]
The result follows readily. 
\end{proof}

\begin{remark} \rm \label{rmk:irrb}
The argument in the proof of Theorem~\ref{thm:Main1} is made possible by the fact that for the $b$-values we consider $\frac{1}{V_b(\acc(b))}$ is a linear function of $b$ with rational coefficients; see Lemma~\ref{lem:LimitMot}~(iii).
Thus, a finite number of mutations will result in a figure with $|OX|=1/V_b(\acc(b)).$  The sequence of quadrilaterals $y^kQ$ considered in Theorem~\ref{thm:Main1} do not themselves give rise to  optimal embeddings, although the limit gives the optimal embedding at the accumulation point. Instead, we get upper bounds for the ellipsoid embedding function at a sequence $z_k$ of points converging to $\acc(b)$ that are represented by a sequence of points $\bigl(z_k, V_b(\acc(b))\bigr)$  on the horizontal line at height $ V_b(\acc(b))$.

The paper \cite{M} uses a similar process  for the ellipsoid embedding function with target a two-fold blowup of $\CP^2$ with irrational size blowups. In this case, the reciprocal of the volume is also a linear function of the blowup sizes with rational coefficients, so the ATFs for $H_b$ behave similarly to the ATFs used in \cite{M}. For the two-fold blowup, extra mutations are done to obtain the embeddings that correspond to the inner corners of the ellipsoid embedding function. Experimentally, it seems a similar process is possible for $H_b,$ but in this paper, because we are only using  ATFs to show that $H_b$ is unobstructed, we do not do this extra step to compute the inner corners.

Note that this general process is not possible in the work of \cite{AADT}, which uses ATFs to show that $H_{1/3}$ has an infinite staircase. In this case where $b$ is rational, the volume at the accumulation point is irrational, so $\frac{1}{V_b(\acc(b))}$ is not a rational combination of $1,b$. Rather in \cite{AADT}, each consecutive mutation gave an optimal embedding corresponding to the inner corners of the ellipsoid embedding function.\hfill$\er$
\end{remark}

\begin{definition} \label{def:brho}
    For a triple, $\Tt:=(\bE_0,\bE_1,\bE),$ let $(\bE_k,\bE_{k+1},\bE)$ denote $y^k\Tt.$ Then, define \[b_{\bE}:=\lim_{k \to \infty} \frac{m_{k}}{d_{k}}.\]
    By Lemma~\ref{lem:LimitMot}, this limit exists and further
    \[ \acc(b_\bE)=\lim_{k \to \infty} \frac{p_{k}}{q_{k}}.\]
\end{definition}

\begin{lemma} \label{lem:bDefined} 
    For all $\Tt:=(\bE_\la,\bE_\mu,\bE_\rho) \in \Cc_n$, the value $b_{\bE_\rho}$ has the following properties: 
    \begin{itemlist}
    \item[{\rm (i)}] $\frac{n+1}{n+2} \leq b_{\bE_\rho}\leq \frac{n+2}{n+3}$
    \item[{\rm (ii)}] If $\Tt=w_j\hdots w_1\Tt^*_n$ where for all $1 \leq i \leq j,$ $w_i$ is $x$ or $y,$
    then for all triples $1 \leq i \leq j$, $w_i \hdots w_1 \Tt^*_n=(\bE_{\la,i},\bE_{\mu,i},\bE_{\rho,i}),$
    \[ \frac{m_{\la,i}}{d_{\la,i}},\frac{m_{\mu,i}}{d_{\mu,i}}>b_{\bE_\rho}.
    \]
    \item[{\rm (iii)}] The triples $y^k\Tt=(\bE_{y^{k-1}\mu},\bE_{y^k\mu},\bE_\rho)$ have the property that
    $m_{y^k\mu}/d_{y^k\mu}>b_{\bE_\rho}$ for all $k \geq 0.$
    \end{itemlist}
\end{lemma}
\begin{proof}
    For (i), by Definition~\ref{def:triples}, the left tuple of $\Tt^*_n$ is $(d,m,p,q)=(n+3,n+2,2n+6,1)$. Therefore, by Lemma~\ref{lem:compatCond}~(vi), the value $b_{\bE_\rho}$ determined by $\Tt=(\bE_\la,\bE_\mu,\bE_\rho) \in \Cc_n$
    have the property that $(n+2)/(n+3)>b_{\bE_\rho}$. To see the lower bound on $b_{\bE_\rho},$ we use \cite[Lem.2.3.5]{MM}. The lemma uses the notation $J_{\bB^U_{n+1}}$ to denote an interval contained in $[0,1).$ The left endpoint of this interval is $b_{\bE_{[2n+8]}}$ for the triple $\Tt^*_{n}.$  If we take $x^k\Tt^*_{n+1}=(\bE_{[2n+8]},\bE_{k+1},\bE_k),$ then the right endpoint of $J_{\bB^U_{n+1}}$ is 
    $ \lim_{k \to \infty} \frac{m_k}{d_k}.$ 
    Then, \cite[Lem.2.3.5]{MM} states that for $n \geq 0$ letting $(d_n,m_n):=(n+3,n+2),$ we have $m_n/d_n \in J_{\bB^U_{n+1}}.$
    This implies that for all triples $\Tt':=(\bE_\la',\bE_\mu',\bE_\rho') \in \Cc_{n+1},$ $m_n/d_n<b_{\bE_{\rho}'}$. Therefore, for $n \geq 1,$ we have $(n+1)/(n+2)<b_{\bE_\rho}.$ For the case $n=0,$ we can directly compute the limits defining the endpoints of $J_{\bB^U_0},$ to conclude that the result holds for $n=0.$ This concludes (i).

    The inequalities in (ii) and (iii) follow from Lemma~\ref{lem:compatCond}~(vi) which states that for $\Tt \in \Cc_n,$ $m_\rho/d_\rho,m_\la/d_\la>m_\mu/d_\mu.$
\end{proof}
We now give a proof of the main result Theorem~\ref{thm:Main1}:
\begin{proof}[Proof of Theorem~\ref{thm:Main1}]
    For each word $wyv^{n+2},$ let $\Tt:=(\bE_0,\bE_1,\bE)$ denote $w\Tt_n^*$ where $\bE:=(d,m,p,q)$, and $b_{\bE}$ be as in Definition~\ref{def:brho}. To prove the result, we show that in the notation of the theorem $b(w,n)=b_\bE,$ i.e. 
    \[ \lim_{k \to \infty} y^kwyv^{n+2}Q_{b_\bE}\] is a triangle and further this gives \[c_{b_\bE}(\acc(b_\bE))=V_{b_\bE}(\acc(b_\bE)).\]

    By Proposition~\ref{prop:triple}~(i), if $\frac{n+1}{n+2} \leq b_\bE \leq \frac{n+2}{n+3},$ then the mutations $yv^{n+2}Q_{b_\bE}$ are defined and $yv^{n+2}Q_{b_\bE}= Q(\Tt^*_n).$ The fact that $\frac{n+1}{n+2} \leq b_\bE \leq \frac{n+2}{n+3}$ holds by Lemma~\ref{lem:bDefined}~(i). By Lemma~\ref{lem:bDefined} (ii) and (iii), the condition on $b_\bE$ in Proposition~\ref{prop:triple} holds for each $k \geq 0$ of $y^kwyv^{n+2}Q_{b_\bE}.$ Hence, for each $k \geq 0,$ $y^kwyv^{n+2}Q_{b_\bE}=Q(y^kw\Tt^*_n)$.

    Let 
    \[(\bE_k,\bE_{k+1},\bE):=
y^kwyv^{n+2}\Tt_n^*  \quad \text{and} \quad OXV_kY_k:=y^kwyv^{n+2}Q_{b_\bE}=Q(y^kw\Tt_n^*).\]
By Lemma~\ref{lem:compatCond}~(iv), for each $k$, $\bE_{k+1}$ is adjacent to $\bE$, so Lemma~\ref{lem:LimitMot} holds for $\{\bE_k\}_{k \geq 0}$ with adjacent tuple $(d,m,p,q).$. 
    Then, by Definition~\ref{def:QT} and Lemma~\ref{lem:LimitMot},  we have 
    \begin{itemlist}
        \item[{\rm(i)}] $|OX|=\frac{m'b_\bE-d'}{q}=\frac{1}{V_{b_\bE}(\acc(b_\bE))}$
        \item[{\rm (ii)}]$|XV_k|=\frac{m_k-d_kb_\bE}{qq_{k+1}} \ \text{and} \ b_\bE=\lim_{k \to \infty} m_k/d_k, \ \implies \ \lim_{k \to \infty} |XV_k|=0.$
        \item[{\rm (iii)}] $|OY_k|=\frac{d_k-m_kb}{q_k} \implies \lim_{k \to \infty}|OY_k|=\frac{\acc(b)}{V_b(\acc(b))}$
        \item[{\rm (iv)}] The slopes of the nodal rays $\vn_{Y_k}$ and $\vn_{V_{k+1}}$ are given by $\pm p_k/q_k \to \acc(b_\bE)$ as $k \to \infty.$
    \end{itemlist}
    Therefore, we conclude that
    \[ \lim_{k \to \infty} y^kwyv^{n+2}Q_{b_\bE}\] is a triangle denoted by $OX_\infty Y_\infty$ where 
    $|OX_\infty|=\frac{1}{V_{b_\bE}(\acc(b_\bE)}$ and $|OY_\infty|=\frac{\acc(b_\bE)}{V_{b_\bE}(\acc(b_\bE))}.$ Hence, by \cite[Proposition 2.2.7]{AADT}, for all $0 < \eps < 1$, the embeddings
    \[ (1-\eps)E\left(\frac{1}{V_{b_\bE}},\frac{\acc(b_\bE)}{V_{b_\bE}(\acc(b_\bE))}\right) \sembeds H_{b_\bE} \iff (1-\eps)E(1,\acc(b_\bE)) \sembeds V_{b_\bE}(\acc(b_\bE))H_{b_\bE}\] hold. This embedding implies that $c_{b_\bE}(\acc(b_\bE)) \leq V_{b_\bE}(\acc(b_\bE)),$ and as the volume constraint is also a lower bound, we get the desired equality. 
\end{proof}

   \section{The details of Proposition~\ref{prop:triple}} \label{sec:computations} In this section, we complete the lemmas necessary to prove Proposition~\ref{prop:triple}. These lemmas rely on the identities computed in Lemma~\ref{lem:exident}.
   
   The first two lemmas show that the affine lengths of the sides of $xQ$ and $yQ$ are as expected.
\begin{lemma}\label{lem:xVec}
Assume that $Q=OXVY=Q(\Tt)$ for $\Tt=(\bE_\la,\bE_\mu,\bE_\rho) \in \Cc_n.$ Then the affine length formulas of $xQ(\Tt)$ equal the affine length formulas of $Q(x\Tt)$, that is
\begin{itemize}
    \item[{\rm (i)}] $|OX_x|=\frac{m_\mu'b-d_\mu'}{q_\mu}$\vspace{.07in}
    \item[{\rm (ii)}] $|V_xY|=\frac{m_\mu'-d_\mu'b}{q_\la q_{x \mu}}$\vspace{.07in} 
    \item[{\rm (iii)}] $|X_xV_x|=\frac{m_\la-d_\la b}{q_\mu q_{x\mu}}$\vspace{.07in} 
\end{itemize}
\end{lemma}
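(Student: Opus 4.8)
\textbf{Proof proposal for Lemma~\ref{lem:xVec}.}

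The plan is to compute the three affine lengths of $xQ$ directly from the data of $Q$ given in \eqref{eq:QT}, using the mutation recipe for $x$: the vertex $X$ and the nodal ray $\vn_X$ are fixed, while $V$ moves to $V_x$ (the point where $\vn_X$ meets the side $YV$) and $X$ of the new quadrilateral lands at a new point $X_x$ on the $x$-axis. The three target formulas are exactly the entries of \eqref{eq:QT} for the triple $x\Tt = (\bE_\la,\bE_{x\mu},\bE_\mu)$, so I need to match: the new $|OX_x|$ should be the $|OX|$-formula with $(\bE_\la,\bE_{x\mu},\bE_\mu)$ in the roles of $(\bE_\la,\bE_\mu,\bE_\rho)$ — i.e. with $\bE_\mu$ now playing the role of $\bE_\rho$, giving $\frac{m'_\mu b - d'_\mu}{q_\mu}$; the new $|V_xY|$ should be $\frac{m'_\mu - d'_\mu b}{q_\la q_{x\mu}}$ (the middle class of $x\Tt$ is $\bE_{x\mu}$); and the new $|X_xV_x|$ should be $\frac{m_\la - d_\la b}{q_\mu q_{x\mu}}$ (the $\la$-data is unchanged, and $q_\rho$ for $x\Tt$ is $q_\mu$).

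For (i): I would express $|OX_x| = |OX| + |XV|$ — this is the geometric fact (used already in the $\Tt_*^0$ example) that the new base is obtained by adjoining the old side $XV$, valid since the $x$-mutation matrix $M_x$ sends the segment $XV$ onto a segment along the $x$-axis of the same affine length. So $|OX_x| = \frac{m_\rho' b - d_\rho'}{q_\rho} + \frac{m_\la - d_\la b}{q_\rho q_\mu}$, and I must show this equals $\frac{m'_\mu b - d'_\mu}{q_\mu}$. Clearing denominators, this is a linear identity in $b$, so it reduces to two scalar identities among the $p,q,t$-coordinates (after substituting the six-tuple formulas \eqref{eq:formdm} for $d,m$ in terms of $p,q,t$, and using $m' = m-q$, $d' = d-3q$). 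These identities should follow from Lemma~\ref{lem:exident}, in particular the expressions in parts (i)–(iii) there relating $p_\mu,q_\mu$ to the $\la$ and $\rho$ data, together with the recursion $q_{x\mu} = t_\la q_\mu - q_\rho$, $p_{x\mu} = t_\la p_\mu - p_\rho$. For (ii) and (iii): the key is to apply $M_x$ — or more slickly, the trick in Remark~\ref{rmk:geom}: write the segment $V V_x$ as a linear combination of the unit vectors $\vn_V$ and $\vn_X$ using the fixed intersection point $P_x$, so that the coefficients (which by Lemma~\ref{lem:vy}, \eqref{eq:PMx}, are simple in the triple coordinates) directly yield $|VV_x|$ hence $|V_xY| = |VY| - |VV_x|$, and the image direction/length of the new side $X_xV_x$ is read off from $M_x\vn_V$ (the new nodal ray, of controlled affine length) and $-\vn_X$. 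Again each resulting equality is linear in $b$ and reduces to coordinate identities from Lemma~\ref{lem:exident}, especially (iv) ($p_\la(p_\rho - 6q_\rho) + q_\la q_\rho = t_\mu$) and (v)–(vii).

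The main obstacle I anticipate is part (iii) (equivalently the direction of the new side $X_xV_x$): as Remark~\ref{rmk:geom} explicitly flags, there is no transparent reason $M_x$ carries $\ovr{YV}$ to the correct direction $-\ovr{X_xV_x} = (1 + p_\mu q_\mu - 6q_\mu^2,\ q_\mu^2)$, so one must instead go through the fixed point $P_x$ and Lemma~\ref{lem:exident}(vi), which is precisely the identity $t_\la(1 + p_\mu q_\mu - 6q_\mu^2,\ q_\mu^2)^T = q_{x\mu}(p_\mu - 6q_\mu,\ q_\mu)^T + q_\mu(p_\rho - 6q_\rho,\ q_\rho)^T$ tailored to this purpose. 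Once that vector identity is in hand, extracting the affine length $|X_xV_x|$ is bookkeeping. I would also double-check the positivity/range-of-$b$ hypothesis \eqref{eq:bsize} propagates correctly to $x\Tt$ — by Remark~\ref{rk:QTobs}(iv) combined with Corollary~\ref{cor:ATFb} this should be automatic, but it is worth confirming that all four lengths of $xQ$ are positive for $b$ in the asserted range so that $xQ$ is genuinely a decorated quadrilateral.
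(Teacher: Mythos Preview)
Your plan for (i) is correct and matches the paper exactly: write $|OX_x| = |OX| + |XV|$, clear denominators, and check the constant term and the $b$-coefficient separately; both reduce via \eqref{eq:formdm} to the identity $p_\la+q_\la = q_\mu t_\rho - q_\rho t_\mu$ from Lemma~\ref{lem:exident}(i).

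Your plan for (ii) and (iii), however, conflates this lemma with Lemma~\ref{lem:xnod}. The $P_x$ decomposition \eqref{eq:PMx} in Lemma~\ref{lem:vy} expresses the \emph{unit direction vector} $\ovr{VY}$ as $\tfrac{q_{x\mu}}{t_\la}\vn_V + \tfrac{q_\mu}{t_\la}\vn_X$; it does not furnish the affine length $|VV_x|$. Those coefficients tell you only the ratios $|VP_x|/|VV_x|$ and $|P_xV_x|/|VV_x|$, so one more datum (the actual location of $V_x$) is still needed. The paper instead computes $|V_xY|$ directly: parametrize the nodal ray from $X$ by $r$ and the side $YV$ by $|V_xY|$, solve the resulting $2\times 2$ linear system for $|V_xY|$, and then simplify numerator and denominator separately using Lemma~\ref{lem:exident}(iii),(iv),(v). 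Part (iii) is then the subtraction $|X_xV_x| = |VV_x| = |VY| - |V_xY|$ (affine length is preserved by $M_x$), reduced again via Lemma~\ref{lem:exident}(ii),(iii),(v).

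The ``main obstacle'' you anticipate --- showing $M_x(\ovr{VY})$ lands in the direction $(1+p_\mu q_\mu - 6q_\mu^2,\ q_\mu^2)$ --- is precisely Lemma~\ref{lem:xnod}(iv), a \emph{separate} statement about direction vectors, and is where the $P_x$ trick and Lemma~\ref{lem:exident}(vi) are actually deployed. Lemma~\ref{lem:xVec} concerns only the four scalar affine lengths, and none of them require knowing the direction of the new side $X_xV_x$. So your proposed route for (ii)--(iii) would stall; replace it with the intersection computation plus subtraction, and save the $P_x$ argument for the direction lemma.
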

\begin{proof} 
Here, we use the fact that the mutation preserves the affine lengths. For (i), note first that $$
|OX_x|  = |OX| + |XV| = \frac1{q_\rho}(m_\rho' b - d_\rho') + \frac 1{q_\rho q_\mu}  (m_\la - d_\la b).
$$
We may check that $|OX_x|= \frac1{q_\mu}(m_\mu' b - d_\mu') $ by considering the constant term and coefficient of $b$ separately.
The constant terms  will match  provided that
\[ \frac{m_\la}{q_\rho q_\mu }-\frac{d_\rho'}{q_\rho }=-\frac{d_\mu'}{q_\mu } \iff m_\la=d_\rho q_\mu -d_\mu q_\rho .\]
If we rewrite this, using the formulas in \eqref{eq:formdm} to write $m_\la,d_\rho,d_\mu$ in terms of $p,q,t$,
 we find that 
\[ m_\la=d_\rho q_\mu -d_\mu q_\rho \iff p_\la+q_\la=q_\mu t_\rho-t_\mu q_\rho,\]
which holds by Lemma~\ref{lem:exident}~(i).
For the $b$-coefficient we need,
 \[ -\frac{d_\la}{q_\rho q_\mu }+\frac{m_\rho' }{q_\rho }=\frac{m_\mu' }{q_\mu } \iff 
 d_\la=m_\rho q_\mu -m_\mu q_\rho.\]
 Again, substituting the centers of the classes to write $d_\la,m_\rho,m_\mu,$ we find
 \[ d_\la=m_\rho q_\mu -m_\mu q_\rho \iff p_\la+q_\la=q_\mu t_\rho-t_\mu q_\rho,\]
 which holds by Lemma~\ref{lem:exident}~(i). This completes the proof of (i).
 
 For (ii), to find the length $|V_xY|$ we use the fact that $V_x$ is the point of intersection of the side $YV$ with the nodal ray 
 $\vn_X = (p_\rho-6q_\rho, q_\rho) $ from $X$.  Thus, denoting  affine distance travelled along $\vn_X$ by $r$, 
 we must solve for $|V_xY|$ given
 \[\bigl(|OX|+(p_\rho-6q_\rho)r, q_\rho r)=(q_\la^2 |V_xY|,|OY|- (-1+p_\la q_\la) |V_xY|\bigr).\]
 \MS
 For the first entry, we get $r=\frac{-q_\la ^2 |V_xY| -|OX|}{p_\rho-6q_\rho}.$
 Substituting this in for $r$ in the equality of the second entry and solving for $|VY|_x,$ we get 
 \begin{align}
    \frac{|OY|(p_\rho-6q_\rho)+|OX|q_\rho}{(-1+p_\la q_\la)(p_\rho-6q_\rho)+q_\la^2 q_\rho}&=|V_xY| \\
    \label{eq:qxmu}
    \frac{|OY|(p_\rho-6q_\rho)+|OX|q_\rho}{p_\rho(-1+p_\la q_\la)+q_\rho(6-6p_\la q_\la+q_\la^2)}&=|V_xY|.
 \end{align}
 
 First, simplifying the denominator, 
   \[ -p_\rho+6q_\rho+q_\la(p_\rho p_\la+q_\la q_\rho-6p_\la q_\rho)=
    -p_\rho+6 q_\rho +q_\la t_\mu=
    t_\la q_\mu-q_\rho 
    =q_{x \mu},\]
    which follows from Lemma~\ref{lem:exident}~(iii)~and~(v).

For the numerator, we have
\[ |OY|(p_\rho-6q_\rho)+|OX|q_\rho=\frac{d_\la p_\rho -d_\rho'q_\la-6q_\la q_\rho+b(-m_\la p_\rho+m_\rho' q_\la+6m_\la q_\rho)}{q_\la}.\]
First, we must show the constant term is $m_\mu'.$ We first change variables in the constant term from the degree coordinates to the centers. Thus, we must show
\[ 3p_\la(p_\rho-6q_\rho)+(p_\rho-6q_\rho)t_\la-q_\la(-3 q_\rho+t_\rho)=8(m_\mu-q_\mu).\] 
We simplify the LHS using Lemma~\ref{lem:exident}~(v)~and~(viii)
\begin{align*}
    3p_\la(p_\rho-6q_\rho)+(p_\rho-6q_\rho)t_\la-q_\la(-3 q_\rho+t_\rho)&=3(-6q_\rho p_\la+p_\la p_\rho+q_\rho q_\la)+(p_\rho-6q_\rho)t_\la-q_\la t_\rho\\
    &=3t_\mu+(p_\rho-6q_\rho)t_\la-q_\la t_\rho \\
    &=3t_\mu-7q_\mu+p_\mu \\
    &=8(m_\mu-q_\mu)
\end{align*}
as desired completing the constant term.

Doing a similar process for the b-coefficient, this reduces to showing
\[ 8(d_\mu-3 q_\mu)=p_\la(p_\rho-6 q_\rho)+3(p_\rho-6q_\rho)t_\la+q_\la(q_\rho-3t_\rho).\]
We can simplify the RHS using Lemma~\ref{lem:exident}~(v)~and~(viii) to get
\begin{align*} 
 p_\la(p_\rho-6 q_\rho)+3(p_\rho-6q_\rho)t_\la+q_\la(q_\rho-3t_\rho)&=t_\mu-3(7q_\mu-p_\mu) \\
 &=8(d_\mu-3q_\mu).
 \end{align*}
This completes (ii).

For (iii), to show $|VY|-|V_xY|=\frac{m_\la-d_\la b}{q_\mu q_{x \mu}}$, we must show
\begin{equation} \label{eq:xxvx} m_\la=\frac{-m_\mu'q_\mu+m_\rho' q_{\mu x}}{q_\la}, \quad \text{and} \quad -d_\la=\frac{d_\mu ' q_\mu-d_\rho ' q_{x\mu}}{q_\la}. \end{equation}
By substituting in the formulas for the degrees in terms of the center, the first equation is equivalent to
\[ q_\la(p_\la+q_\la+3t_\la)=q_\mu(7q_\mu-p_\mu-3t_\mu)+q_{\mu x}(p_\rho-7q_\rho+3t_\rho).\]
If we substitute in $q_{x \mu}=t_\la q_\mu-q_\rho,$
the terms with coefficient $3$ cancel out by Lemma~\ref{lem:exident}~(v). Then, it remains to show
\[ q_\la(p_\la+q_\la)=q_\mu(7q_\mu-p_\mu)+(t_\la q_\mu-q_\rho)(p_\rho-7q_\rho).\]
This holds by substituting in the identities for $7q_\mu-p_\mu$ and $p_\rho-7q_\rho$ in Lemma~\ref{lem:exident}~(ii)~and~(iii). Thus, the first equation in \eqref{eq:xxvx} hold. A similar computation follows to show the second equation in \eqref{eq:xxvx} holds. The details are left to the reader. 
 \end{proof}

\begin{lemma} \label{lem:yVec}
Assume that $Q=OXVY=Q(\Tt)$ for $\Tt=(\bE_\la,\bE_\mu,\bE_\rho) \in \Cc_n$ and $b<m_\mu/d_\mu$. Then the nodal ray from $Y$ hits the side $XV$ of $Q$, and the affine length formulas of $yQ(\Tt)$ 
 equal the affine length formulas of $Q(y\Tt)$, that is
\begin{itemize}
    \item[{\rm (i)}] $|OY_y|=\frac{1}{q_\mu}(d_\mu-m_\mu b)$;
    \item[{\rm (ii)}] $|XV_y|=\frac{m_\mu-d_\mu b}{q_\rho q_{y\mu}}$; 
    \item[{\rm (iii)}] $|V_yY_y|_y=\frac{m_\rho'-d_\rho' b}{q_\mu q_{y\mu}}=|XV|-|XV|_y$.
\end{itemize}
\end{lemma}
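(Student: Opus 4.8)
The plan is to mirror the proof of Lemma~\ref{lem:xVec}, adapted to the $y$-mutation. First I would verify that the nodal ray $\vn_Y = (q_\la, -p_\la)^T$ emanating from $Y$ actually meets the side $XV$ (and not the side $OX$): since $\vn_Y$ points down and to the right while the side $XV$ runs from the point $X$ on the $x$-axis upward to $V$, and since the hypothesis $b < m_\la/d_\la$ guarantees (via $|OY| = (d_\la - m_\la b)/q_\la > 0$ and the positivity conditions \eqref{eq:bsize}) that the vertex $Y$ sits high enough on the vertical axis, a ray of that slope from $Y$ must exit the quadrilateral through $XV$. I would make this precise by the same bookkeeping as in Remark~\ref{rk:QTobs}~(iv) and the cited Lemma~\ref{lem:yVec} forward-reference in Proposition~\ref{prop:ATFrecur}; the key inequality to check is that the intersection parameter along $\vn_Y$ and the parameter along $\ovr{XV}$ both come out positive and the latter is at most $|XV|$.

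Next, for part~(i), I would compute $|OY_y|$ from the vector-sum-of-sides relation for the quadrilateral $OXV_yY_y$ (as in the proof of Lemma~\ref{lem:vmut}~(i)), which expresses $|OY_y|$ in terms of $|OY|$, $|OX|$, and the slopes. This reduces to an identity among the $p,q,t$-coordinates which should be exactly one of the entries of Lemma~\ref{lem:exident} — most likely (i) or (ii), by symmetry with the $x$-case where $|OX_x|$ reduced to Lemma~\ref{lem:exident}~(i). For part~(ii), I would solve for $V_y$ as the intersection of the side $XV$ with the line through $Y$ in direction $\vn_Y$: writing the affine distance along $\vn_Y$ as $r$ and the distance along $\ovr{XV} = (1 + p_\rho q_\rho - 6q_\rho^2,\, q_\rho^2)^T$ as proportional to $|XV_y|$, I get two scalar equations; eliminating $r$ and simplifying the resulting denominator should produce $q_\rho q_{y\mu}$ using $q_{y\mu} = t_\rho q_\mu - q_\la$ together with Lemma~\ref{lem:exident}~(ii),(iii),(v), exactly as the denominator in \eqref{eq:qxmu} simplified to $q_{x\mu}$ in the $x$-case. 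The numerator, after changing from degree coordinates to centers via \eqref{eq:formdm}, should reduce to $m_\mu$ (constant term) and $-d_\mu$ ($b$-coefficient) using the same family of identities. Part~(iii) then follows formally: $|V_yY_y| = |VY| + |VY|$-increment, or more precisely $|V_yY_y| = |XV| - |XV_y|$ by the vector-sum relation, and substituting the formulas from the definition of $Q$ and part~(ii) reduces to showing $m_\rho' = (m_\mu q_\mu \text{-type combination})$ and the analogous $d_\rho'$ identity, again a consequence of Lemma~\ref{lem:exident}.

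Throughout, the computational engine is the dictionary in \eqref{eq:formdm} between $(d,m)$ and $(p,q,t)$, which converts every length identity into a polynomial identity in the center coordinates, plus the seven identities of Lemma~\ref{lem:exident}. I would also invoke the geometric observation in Remark~\ref{rmk:geom}: rather than chasing the mutation matrix $M_y$ directly on the direction vector $\ovr{XV}$, use that $M_y$ fixes the intersection point $P_y$ of the nodal rays $\vn_Y$ and $\vn_V$, so that the segment $VV_y$ decomposes as a linear combination of the unit vectors $\vn_Y$ and $\vn_V$ with coefficients expressible simply in the coordinates of $\Tt$ (the $y$-analogue of \eqref{eq:PMx}, to be recorded in Lemma~\ref{lem:vy}); this is what lets one read off the new direction $\ovr{V_yY_y}$ cleanly and is needed to confirm the full data of $yQ$ matches Definition~\ref{def:QT}.

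The main obstacle I expect is bookkeeping rather than conceptual: getting the orientations and signs right in the $y$-mutation (the nodal ray at the new vertex $V_y$ is $-\vn_Y$, and one must be careful which of the two ``opposite sides'' the ray $\vn_Y$ hits, which is exactly why the hypothesis $b < m_\la/d_\la$ — and in the application $b = b_{\bE_\rho}$, which lies in the admissible range by Corollary~\ref{cor:ATFb} — is invoked). The genuinely non-routine step, as flagged in Remark~\ref{rmk:geom}, is identifying the correct new direction vector $\ovr{V_yY_y} = (-q_\la^2,\, q_\la p_\la - 1)^T$ for $yQ$, since there is no transparent matrix computation; the $P_y$-fixed-point trick is the way around this, and verifying that the resulting linear combination simplifies correctly is where Lemma~\ref{lem:exident}~(vii) (the $y$-analogue of (vi)) does the real work.
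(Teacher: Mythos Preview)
Your proposal is essentially correct and follows the same route as the paper: mirror the computation in Lemma~\ref{lem:xVec}, swapping the roles of $\bE_\la$ and $\bE_\rho$ and using the identities of Lemma~\ref{lem:exident}~(ii) (and (iii),(v)) in place of (i). Two points are worth flagging.

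First, the order of the positivity argument. You propose to verify \emph{geometrically} that $\vn_Y$ hits the side $XV$ before computing anything, and you note this is where the hypothesis $b<m_\la/d_\la$ enters. The paper does it the other way around, and this is cleaner: one simply \emph{defines} $|XV_y|\in\R$ as the solution of the linear intersection equation between the line through $Y$ in direction $\vn_Y$ and the line through $X$ in direction $\ovr{XV}$, without assuming it is positive. The algebraic computation then yields $|XV_y|=(m_\mu-d_\mu b)/(q_\rho q_{y\mu})$, and positivity of this expression for $b=b_{\bE_\rho}$ (hence the fact that $V_y$ lies on the segment $XV$) follows immediately from Corollary~\ref{cor:ATFb}. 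This avoids the somewhat delicate direct geometric argument you sketch.

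Second, scope. Your last two paragraphs, about the direction vector $\ovr{V_yY_y}$, the mutation matrix $M_y$, the fixed point $P_y$, and the decomposition analogous to \eqref{eq:PMx}, do not belong to this lemma at all: Lemma~\ref{lem:yVec} concerns only the affine lengths of $yQ$. The nodal rays and direction vectors of $yQ$ are the content of the separate Lemma~\ref{lem:ynod}, where the $P_y$-trick and Lemma~\ref{lem:exident}~(vii) are indeed used. So while your instincts about that computation are right, it should be excised from the proof of the present statement.
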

\begin{proof}
The proof that the affine length formulas of $yQ$ are as claimed is very similar to Lemma~\ref{lem:xVec}, except that we use
the identities in Lemma~\ref{lem:exident}~(ii) rather than those in Lemma~\ref{lem:exident}~(i).
Note that this argument does not use the fact that the side lengths are positive.  Indeed, as in the proof of Lemma~\ref{lem:xVec}~(ii), we simply define $|XV_y|\in \R$ to be the solution of the equation
$$
\bigl(|OX| + |XV_y| (1+p_\rho q_\rho - 6q_\rho^2), |XV_y| q_\rho^2\bigr) = \bigl(sq_\la, |OY| - s p_\la\bigr)
$$
that expresses the fact that the nodal ray from $Y$ meets the line through  $X$ and $V$ in the point $V_y$.   Thus 
$|XV_y|>0$ precisely if $V_y$ has positive $y$ coordinate.
We then calculate that the $y$-coordinate of $|XV_y|$ is $(m_\mu-d_\mu b)/(q_\rho q_{y\mu})$.  By assumption, we have 
$m_\mu/d_\mu>b$, so $|XV_y|$ is positive.
Thus $V_y$ lies on the side $XV$ as claimed.
Further details are left to the reader. 
\end{proof}

Next we show given that the nodal rays and direction vectors of $xQ$ and $yQ$ are as expected.

\begin{lemma} \label{lem:xnod} For $\Tt \in \Cc_n$,  letting $Q(\Tt)=OXVY$, the nodal rays and direction vectors of $xQ(\Tt)$
satisfy the formulas of the quadrilateral $Q(x\Tt)$, that is
\begin{itemize}
     \item[{\rm (i)}] $M_X =\begin{pmatrix} 1-p_\rho q_\rho+6q_\rho^2 & (p_\rho-6q_\rho)^2 \\ -q_\rho^2 & 1+p_\rho q_\rho-6q_\rho^2 \end{pmatrix}$
     \item[{\rm (ii)}] $M_X\vn_{\ovx V}=\begin{pmatrix}
    p_\mu-6q_\mu \\ q_\mu
    \end{pmatrix}=\vn_{\ovx{V_x}}$ 
    \item[{\rm (iii)}] $M_X\vn_{\ovy V}=\begin{pmatrix}
    p_\mu-6q_\mu \\ q_\mu
    \end{pmatrix}=\vn_{\ovx V_x}$ 
    \item[{\rm (iv)}] $M_X(\ovr{VY})=\begin{pmatrix}
   1+p_\mu q_\mu-6q_\mu^2 \\ q_\mu^2
    \end{pmatrix}=\ovr{X_xV_x}$
\end{itemize}
\end{lemma}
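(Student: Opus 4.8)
Throughout, write $\Tt=(\bE_\la,\bE_\mu,\bE_\rho)$. I would prove (i)--(iv) in turn: parts (i)--(iii) reduce to a one-line matrix computation together with the identities of Lemma~\ref{lem:exident} and the defining relations of a generating triple, while (iv) — the direction of the mutated side — is the delicate one and is handled through the geometric observation of Remark~\ref{rmk:geom}.

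\emph{Part (i).} By construction $M_x$ is the unique linear map with $M_x\vn_X=\vn_X$ and $M_x(\ovr{XV})=\vec e_X$; uniqueness holds because, by Definition~\ref{def:QT}, the vectors $\vn_X=(p_\rho-6q_\rho,q_\rho)^T$ and $\ovr{XV}=(1+p_\rho q_\rho-6q_\rho^2,q_\rho^2)^T$ are linearly independent (the determinant of the matrix with these columns equals $-q_\rho\neq 0$). Setting $u:=\vn_X$ and $v:=(-q_\rho,\,p_\rho-6q_\rho)^T$, the matrix claimed in (i) is exactly $I+uv^T$: it has determinant $1$ (hence lies in $\op{SL}_2(\Z)$ and preserves affine length), it fixes $u$ since $v^Tu=0$, and a one-line check shows it sends $\ovr{XV}$ to $\ovr{XV}-q_\rho u=\vec e_X$. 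Hence it equals $M_x$. This rank-one form also yields the working rule $M_xw=w+\bigl((p_\rho-6q_\rho)w_2-q_\rho w_1\bigr)\vn_X$, used below.

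\emph{Parts (ii)--(iii).} By Remark~\ref{rmk:unique} and Definition~\ref{def:QT}, $\vn_V$ is $\vn_{\ovx V}$ when $\Tt=x\Tt'$ and $\vn_{\ovy V}$ when $\Tt=y\Tt'$; in both cases I would first rewrite $\vn_V$ in terms of the data of $\Tt$ via the mutation recursion~\eqref{eq:muttrip}. If $\Tt=x\Tt'$, then $\Tt'=\ov x\Tt=(\bE_\la,\bE_\rho,\bE_{\oxr})$ and \eqref{eq:muttrip} gives $\bE_\mu=t_\la\bE_\rho-\bE_{\oxr}$, hence $\vn_{\ovx V}=-(p_{\oxr}-6q_{\oxr},q_{\oxr})^T=(p_\mu-6q_\mu,q_\mu)^T-t_\la\vn_X$; applying $M_x$, using $M_x\vn_X=\vn_X$ and the working rule to get $M_x(p_\mu-6q_\mu,q_\mu)^T=(p_\mu-6q_\mu,q_\mu)^T+(p_\rho q_\mu-q_\rho p_\mu)\vn_X$, the claim collapses to $p_\rho q_\mu-q_\rho p_\mu=t_\la$, which is part (c) of Definition~\ref{def:gentr} for $\Tt$. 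If $\Tt=y\Tt'$, then $\Tt'=\ov y\Tt=(\bE_{\oyl},\bE_\la,\bE_\rho)$ and \eqref{eq:muttrip} gives $\bE_\mu=t_\rho\bE_\la-\bE_{\oyl}$, hence $\vn_{\ovy V}=(q_\mu-t_\rho q_\la,\;t_\rho p_\la-p_\mu)^T$; the working rule yields $M_x\vn_{\ovy V}=\vn_{\ovy V}+\bigl(t_\rho t_\mu-t_{y\mu}\bigr)\vn_X$, where I have used Lemma~\ref{lem:exident}~(iv) applied both to $\Tt$ and to $y\Tt=(\bE_\mu,\bE_{y\mu},\bE_\rho)$, and then $t_\rho t_\mu-t_{y\mu}=q_\mu p_\rho-p_\mu q_\rho=t_\la$ by parts (d) of $y\Tt$ and (c) of $\Tt$ in Definition~\ref{def:gentr}; finally the two coordinates of $\vn_{\ovy V}+t_\la\vn_X$ reduce to $(p_\mu-6q_\mu,q_\mu)^T$ by the identities $p_\mu+q_\mu=q_\rho t_\la+p_\la t_\rho$ and $7q_\mu-p_\mu=6q_\rho t_\la+q_\la t_\rho-p_\rho t_\la$ of Lemma~\ref{lem:exident}~(iii). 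In either case the output $(p_\mu-6q_\mu,q_\mu)^T$ is precisely the $\vn_X$-datum of $x\Tt=(\bE_\la,\bE_{x\mu},\bE_\mu)$ prescribed by Definition~\ref{def:QT}.

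\emph{Part (iv).} A direct computation with the working rule, simplified via Lemma~\ref{lem:exident}~(iv) and~(ii), gives $M_x(\ovr{VY})=\ovr{VY}+q_{x\mu}\vn_X$, but I do not expect this to reduce to $(1+p_\mu q_\mu-6q_\mu^2,q_\mu^2)^T$ using only the listed identities — this is exactly the obstacle flagged in Remark~\ref{rmk:geom}, and it is the hard part. Instead the plan is geometric: the $x$-mutation fixes the point $P_x=\vn_X\cap\vn_V$ (it fixes $X$ and the direction $\vn_X$, hence fixes the whole ray from $X$ in that direction, which contains both $P_x$ and the new vertex $V_x$) and carries $V$ to $X_x$, so $M_x\bigl(\ovr{V_xV}\bigr)=\ovr{V_xX_x}$. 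Decomposing $\ovr{V_xV}=\ovr{V_xP_x}+\ovr{P_xV}=\alpha\vn_X+\beta\vn_V$ along the two nodal directions and applying $M_x$ gives, by parts (ii)/(iii), $\ovr{V_xX_x}=\alpha\vn_X+\beta(p_\mu-6q_\mu,q_\mu)^T=\alpha(p_\rho-6q_\rho,q_\rho)^T+\beta(p_\mu-6q_\mu,q_\mu)^T$. Computing the scalars $\alpha,\beta$ explicitly in terms of the coordinates of $\Tt$ — the content of Lemma~\ref{lem:vy} and~\eqref{eq:PMx} — shows $(\alpha,\beta)$ is proportional to $(q_\mu,q_{x\mu})$, and then Lemma~\ref{lem:exident}~(vi), which reads $t_\la(1+p_\mu q_\mu-6q_\mu^2,q_\mu^2)^T=q_{x\mu}(p_\mu-6q_\mu,q_\mu)^T+q_\mu(p_\rho-6q_\rho,q_\rho)^T$, identifies $\ovr{V_xX_x}$ as a scalar multiple of $(1+p_\mu q_\mu-6q_\mu^2,q_\mu^2)^T$; since $M_x\in\op{SL}_2(\Z)$ preserves affine length, $\ovr{VY}$ is primitive, and the orientation is as in Figure~\ref{fig:xmut}, we conclude $M_x(\ovr{VY})=(1+p_\mu q_\mu-6q_\mu^2,q_\mu^2)^T=\ovr{X_xV_x}$.

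The main obstacle is part (iv): the transformation of the mutated side's direction is the one quantity that does not pass transparently through the adjacency and compatibility identities, so it must be handled by exploiting the fixed point $P_x$ together with the explicit decomposition supplied by Lemma~\ref{lem:vy}. Parts (i)--(iii) are essentially bookkeeping once $M_x$ is written in rank-one form and the correct members of Lemma~\ref{lem:exident} and Definition~\ref{def:gentr} are located.
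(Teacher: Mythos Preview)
Your proposal is correct and follows the paper's overall strategy, but with two noteworthy differences worth flagging.

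For parts (i)--(iii) your rank-one presentation $M_x=I+\vn_X\,v^T$ with $v=(-q_\rho,p_\rho-6q_\rho)^T$ and the resulting ``working rule'' $M_xw=w+(v^Tw)\vn_X$ is a genuine improvement over the paper: the paper simply multiplies out $M_x\vn_{\ovx V}$ and $M_x\vn_{\ovy V}$ entry by entry and then hunts through Lemma~\ref{lem:exident} (using (i), (v) and the recursion) to simplify. Your reduction of (ii) to the single identity $p_\rho q_\mu-q_\rho p_\mu=t_\la$ and of (iii) to $t_\rho t_\mu-t_{y\mu}=t_\la$ together with Lemma~\ref{lem:exident}~(iii) is cleaner and makes the role of Definition~\ref{def:gentr}~(c),(d) transparent.

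For part (iv) you are following exactly the idea of Remark~\ref{rmk:geom} and Lemma~\ref{lem:vy}, but you take an unnecessary detour. You decompose the geometric segment $\ovr{V_xV}$ via the point $P_x$, obtain coefficients only up to proportionality, and then close with a primitivity-and-orientation argument. The paper instead applies the decomposition \eqref{eq:PMx} directly to the \emph{primitive} direction vector $\ovr{VY}$, obtaining the exact coefficients $c_1=q_{x\mu}/t_\la$, $c_2=q_\mu/t_\la$; applying $M_x$ (using your parts (ii)/(iii)) and then Lemma~\ref{lem:exident}~(vi) yields $M_x(\ovr{VY})=(1+p_\mu q_\mu-6q_\mu^2,\,q_\mu^2)^T$ on the nose, with no appeal to primitivity or orientation. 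Since you already invoke Lemma~\ref{lem:vy} and \eqref{eq:PMx}, you may as well use them on $\ovr{VY}$ itself and drop the final paragraph's extra step.
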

\begin{proof}
For (i), recall the definition of $M_X$ in Definition~\ref{def:Q}. Then, (i) follows by direct computation. For (ii), we can perform the matrix multiplication and simplify each entry by using the $t$-compatibility conditions. For the first entry, we have
\begin{align*}& p_\rho(q_\rho p_{\oxr}-p_\rho q_{\oxr})+6q_\rho(p_\rho q_{\oxr}-q_\rho p_{\oxr})+6q_{\oxr}-p_{\oxr}&\\
&\qquad = p_\rho t_\la-p_{\oxr}+6(q_{\oxr}-q_\rho t_\la)=p_\mu-6q_\mu,
\end{align*} and for the second entry, we have
\[ q_\rho(p_{\oxr}q_\rho-q_{\oxr}p_\rho)-q_{\oxr}=q_\rho t_\la-q_{\oxr}=q_\mu.\]
This completes (ii).

For (iii), we begin with the second entry of $M_X\vn_{\ovy V}:$
\begin{equation} \label{eq:xqmu}
     q_\rho(p_{\oyl}p_\rho+q_\rho q_{\oyl}-6q_\rho p_{\oyl})+p_{\oyl}=q_\rho t_\la+p_{\oyl}=t_\rho q_\la-q_{\oyl}=q_\mu,
\end{equation}
where this follows from Lemma~\ref{lem:exident}~(i)~and~(v), and the recursive structure of the triples. 
For the first entry of $M_{x}\vn_{\ovx y}$, we want to show $ p_\mu-6 q_\mu=p_\oyl(p_\rho-6q_\rho)^2-(1-p_\rho q_\rho+6 q_\rho^2)q_\oyl.$ Using the expression found for $q_\mu$ in \eqref{eq:xqmu}, this is equivalent to showing $$
p_\mu=6p_{\oyl}+p_\rho(p_\rho p_\oyl+q_\rho q_{\oyl}-6p_{\oyl}q_\rho)-q_{\oyl}.
$$
For the RHS, we have
\[6p_{\oyl}+p_\rho(p_\rho p_\oyl+q_\rho q_{\oyl}-6p_{\oyl}q_\rho)-q_{\oyl}=6p_{\oyl}+p_\rho t_\la-q_{\oyl}=t_\rho P_\la-p_\oyl=p_\mu\]
which holds by Lemma~\ref{lem:exident}~(ii)~and~(v). 

This concludes the proof of (iii). 
The proof of (iv) is given in  Lemma~\ref{lem:vy}.
\end{proof}

\begin{center}
  \begin{figure}[ht]
\begin{overpic}[%grid,
scale=0.75,unit=0.5mm]{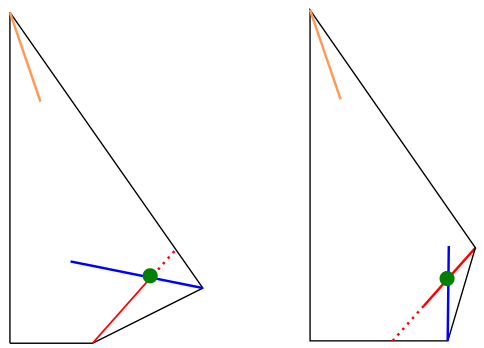}
   %left figure
   \put(0,-5){$O$}
   \put (-4,132){$Y$}
   \put (37,-5){$X$}
   \put (85,22){$V$}
   \put (70,43) {$V_x$}
   \put (37,15) {\color{red}$\vn_X$}
   \put (35,37) {\color{blue}$\vn_V$}
   \put (55,36) {\color{black!60!green}$P_x$}
   \put (17,90) {\color{orange}$\vn_Y$}
   %right figure
   \put (137,90) {\color{orange}$\vn_Y$}
   \put (166,30) {\color{black!60!green}$P_x$}
   \put(125,-5){$O$}
   \put (115,132){$Y$}
   \put (155,-5){$X$}
   \put (177,-5){$X_x$}
   \put (191,40){$V_x$}
   %\put (70,43) {\color{red}$V_x$}
   \put (130,20) {\color{red}$-\vn_X=\vn_{V_x}$}
   \put (130,45) {\color{blue}$\vn_{X_x}=M_X(\vn_V)$}
\end{overpic}
\vskip 0.1in
 \caption{On the left we show $Q=OXVY$ and on the right we see its $x$-mutation $xQ=OX_xV_xY.$ The point $P_x$, which is the intersection of $\vn_X$ and $\vn_V$ is fixed.}
    \label{fig:xmut}
\end{figure}
\end{center}

\begin{rmk}\rm\label{rmk:geom}
Consider the mutation by $x$ that is illustrated in Fig.~\ref{fig:xmut}.
This is effected by a matrix $M_X$ that fixes $X$ and the point $V_x$ where the nodal direction $\vn_X$ meets the side $YV$, and takes $V$ to the new vertex $X_x$ on the $x$-axis.  We did not find a direct proof that $M_X$ takes the line segment $V_xV$ (which is in the direction $\ovr{YV}$) to a line $V_xX_x$ in the correct direction, namely $-\ovr{X_xV_x} = \bigl(1 +p_\mu q_\mu-6q_\mu^2, \ q_\mu^2\bigr)$.  (There was a similar difficulty in establishing what the $y$-mutation does to the vector $\ovr{XV}$.)  Instead we used the fact that the $x$-mutation fixes the point $P_x$ of intersection of the nodal rays $\vn_X, \vn_V$ through $X,V$, since it fixes $\vn_X$ and takes the nodal ray $\vn_V$ to the new nodal ray at the new vertex $X_x$.\footnote{Dusa McDuff suggested using this method to simplify the computations.}  Thus, if we think of the line segment $VV_x$ as the vector sum of the line segments $VP_x, P_xV_x$ in the directions $\vn_V, \vn_X$ we can write $VV_x$ as a linear combination of these two (unit) vectors.   
It turns out the the coefficients of this linear combination can be expressed very simply in terms of the coordinates of the elements in $\Tt$; see Lemma~\ref{lem:vy} and in particular \eqref{eq:PMx}. Moreover, because the vectors $\vn_V, \vn_X$  behave in a transparent way under $x$-mutation we can find the required formula for the direction $\ovr{X_xV_x}$.
\end{rmk}

\begin{lemma} \label{lem:vy} $M_{x}(\ovr{VY})=\begin{pmatrix}
   1+p_\mu q_\mu-6q_\mu^2 \\ q_\mu^2
    \end{pmatrix}$
\end{lemma}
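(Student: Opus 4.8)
The plan is to follow the geometric strategy sketched in Remark~\ref{rmk:geom}: rather than multiplying out $M_x\ovr{VY}$ directly, I would exploit that the $x$-mutation matrix $M_x$ fixes the intersection point $P_x=\vn_X\cap\vn_V$ of the two nodal rays through $X$ and $V$, together with the already-established transparent behaviour $M_x\vn_X=\vn_X$ and $M_x\vn_V=\begin{pmatrix}p_\mu-6q_\mu\\ q_\mu\end{pmatrix}$ of these nodal rays under $x$-mutation (Lemma~\ref{lem:xnod}(i)--(iii), which holds whether $\Tt$ arose as an $x$- or a $y$-mutation). Thus it suffices to write $\ovr{VV_x}$ as a linear combination of $\vn_V$ and $\vn_X$, push it forward by $M_x$, and recognize the result.

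\emph{Step 1: decompose $\ovr{VV_x}$.} Since $\ovr{VV_x}=\ovr{VP_x}+\ovr{P_xV_x}$, with $\ovr{VP_x}$ a scalar multiple of $\vn_V$ and $\ovr{P_xV_x}$ a scalar multiple of $\vn_X$, write $\ovr{VV_x}=a\vn_V+c\vn_X$. To find $a$, note $\ovr{XP_x}=\ovr{XV}+a\vn_V$ must be parallel to $\vn_X$, so $a=-\det(\ovr{XV},\vn_X)/\det(\vn_V,\vn_X)$. Using the data of $Q$ from \eqref{eq:QT}, a one-line computation gives $\det(\ovr{XV},\vn_X)=|XV|\,q_\rho=\tfrac{m_\la-d_\la b}{q_\mu}$, while expanding $\vn_V$ via the predecessor recursion ($q_{\oyl}=t_\rho q_\la-q_\mu$ if $\Tt=y\Tt'$, resp. $q_{\oxr}=t_\la q_\rho-q_\mu$ if $\Tt=x\Tt'$) and the $t$-compatibility clause of Definition~\ref{def:gentr} applied to $\Tt'$ gives $\det(\vn_V,\vn_X)=-t_\la$ in both cases; hence $a=\tfrac{m_\la-d_\la b}{q_\mu t_\la}$. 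Next, $\ovr{VV_x}$ is parallel to $\ovr{VY}$ (both lie along the side $VY$), so $a\det(\vn_V,\ovr{VY})+c\det(\vn_X,\ovr{VY})=0$; here $\det(\vn_X,\ovr{VY})=q_\la t_\mu-(p_\rho-6q_\rho)=q_{x\mu}$ by Lemma~\ref{lem:exident}(ii),(iv), and $\det(\vn_V,\ovr{VY})=-q_\mu$ (again uniformly in both cases, from the compatibility relations of $\Tt'$, Lemma~\ref{lem:exident}, and the recursion). Therefore $a/c=q_{x\mu}/q_\mu$ and $c=\tfrac{m_\la-d_\la b}{q_{x\mu}t_\la}$, giving the ``simple'' combination $\ovr{VV_x}=\tfrac{m_\la-d_\la b}{t_\la q_\mu q_{x\mu}}\bigl(q_{x\mu}\vn_V+q_\mu\vn_X\bigr)$ alluded to in Remark~\ref{rmk:geom}.

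\emph{Step 2: apply $M_x$ and invoke Lemma~\ref{lem:exident}(vi).} Applying $M_x$ and using $M_x\vn_X=\vn_X$, $M_x\vn_V=\begin{pmatrix}p_\mu-6q_\mu\\ q_\mu\end{pmatrix}$,
\[ M_x(\ovr{VV_x})=\frac{m_\la-d_\la b}{t_\la q_\mu q_{x\mu}}\left(q_{x\mu}\begin{pmatrix}p_\mu-6q_\mu\\ q_\mu\end{pmatrix}+q_\mu\begin{pmatrix}p_\rho-6q_\rho\\ q_\rho\end{pmatrix}\right)=\frac{m_\la-d_\la b}{q_\mu q_{x\mu}}\begin{pmatrix}1+p_\mu q_\mu-6q_\mu^2\\ q_\mu^2\end{pmatrix}, \]
the second equality being exactly Lemma~\ref{lem:exident}(vi). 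Since $M_x$ sends $V$ to $X_x$ and fixes $V_x$, the left side is the displacement from $X_x$ to $V_x$; the vector $\begin{pmatrix}1+p_\mu q_\mu-6q_\mu^2\\ q_\mu^2\end{pmatrix}$ is primitive (its entries are coprime modulo $q_\mu$), so the affine length of $M_x(\ovr{VV_x})$ is $\tfrac{m_\la-d_\la b}{q_\mu q_{x\mu}}$, which equals the affine length $|VV_x|$ of $\ovr{VV_x}$ because $M_x\in SL_2(\Z)$ preserves it (this recovers $|X_xV_x|=\tfrac{m_\la-d_\la b}{q_\mu q_{x\mu}}$ of Lemma~\ref{lem:xVec}(iii)). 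Dividing through by $|VV_x|$ and using $\ovr{VV_x}=|VV_x|\,\ovr{VY}$ gives $M_x(\ovr{VY})=\begin{pmatrix}1+p_\mu q_\mu-6q_\mu^2\\ q_\mu^2\end{pmatrix}$; the sign is correct since $\tfrac{m_\la-d_\la b}{q_\mu q_{x\mu}}>0$ in the admissible range $b<m_\la/d_\la$.

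The main obstacle is exactly the one flagged by the authors: there is no transparent direct argument that the shear $M_x$ carries the primitive direction of $YV$ to the correct primitive direction of $X_xV_x$, so the content is concentrated in Step~1 — locating $P_x$ and $V_x$ precisely enough to identify the coefficients $(a,c)$ with the explicit expressions $\tfrac{m_\la-d_\la b}{t_\la q_\mu}$ and $\tfrac{m_\la-d_\la b}{t_\la q_{x\mu}}$ — after which Lemma~\ref{lem:exident}(vi) does the algebra. A secondary bookkeeping point is the case split on the nodal ray $\vn_V$ (whether $\Tt=x\Tt'$ or $\Tt=y\Tt'$), but both cases collapse to the same determinant values $\det(\vn_V,\vn_X)=-t_\la$ and $\det(\vn_V,\ovr{VY})=-q_\mu$ via the relevant compatibility relation of the predecessor triple $\Tt'$, so the argument is uniform.
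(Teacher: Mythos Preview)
Your proof is correct and follows essentially the same approach as the paper: both decompose the direction along $VY$ in the basis $(\vn_V,\vn_X)$, handle the two cases for $\vn_V$ uniformly to obtain coefficients $q_{x\mu}/t_\la$ and $q_\mu/t_\la$, then apply $M_x$ and invoke Lemma~\ref{lem:exident}(vi). The only cosmetic difference is that the paper decomposes the unit vector $\ovr{VY}$ directly (solving the $2\times 2$ system for $c_1,c_2$), whereas you decompose the full displacement $\ovr{VV_x}=|VV_x|\,\ovr{VY}$ via the intersection point $P_x$ and determinants, picking up and then dividing out the scalar $\tfrac{m_\la-d_\la b}{q_\mu q_{x\mu}}$; this avoids introducing $b$ in the paper's version but is otherwise the same computation.
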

\begin{proof}
Let $\vn_V$ be the nodal ray emanating from $V$. As we've seen, the formula for $\vn_V$ depends on if the previous mutation was $x$ or $y$. Here, we will show that regardless of the previous mutation
\begin{align}\label{eq:PMx}
\ovr{VY}=\tfrac{q_{x \mu}}{t_\la}\vn_V+\tfrac{q_\mu}{t_\la} \vn_X.
\end{align}
See Remark~\ref{rmk:geom}, that explains why such a decomposition might prove useful in the current context.
We will consider the two  cases separately. 

First, assume $\Tt=x\overline{\Tt}$ for some $\overline{\Tt}$. Then, we want to solve for $c_1,c_2$ such that
\[ \ovr{VY}=\begin{pmatrix} -q_\la^2 \\ q_\la p_\la-1 \end{pmatrix}=c_1 \vn_{\ovx V}+c_2 \vn_X=-c_1\begin{pmatrix}p_\oxr-6q_\oxr \\ q_\oxr \end{pmatrix}+c_2 \begin{pmatrix} p_\rho-6q_\rho \\ q_\rho \end{pmatrix} .\]
Denote the components of $\ovr{VY}:=(v_1,v_2)^T.$
Using the second component $-c_1q_\oxr+c_2 q_\rho=v_2$ to solve for $c_2$ and then substituting this into the first component, we get 
\[ \tfrac1{q_\rho}(c_1(p_\rho q_\oxr-p_\oxr q_\rho)+v_2(p_\rho-6q_\rho))=v_1.\]
Then, we can solve for $c_1$, and use this to solve for $c_2$, to obtain
\begin{align} \label{eq:c1c2}
    c_1&=\frac{-q_\rho q_\la^2+(q_\la p_\la-1)(p_\rho-6q_\rho)}{p_\oxr q_\rho-p_\rho q_\oxr}, \\ \notag
 c_2&=\frac{-q_\oxr q_\la^2+(q_\la p_\la-1)(p_\oxr-6q_\oxr)}{p_\oxr q_\rho-p_\rho q_\oxr}.\end{align} 
For both constants, by $t_\la$-compatibility, the denominator $p_\oxr q_\rho-p_\rho q_\oxr=t_\la.$
For $c_1,$ the numerator is precisely the denominator of \eqref{eq:qxmu} which we compute in the next line to be $q_{x\mu}.$ Furthermore, we see the numerator of $c_2$ is the corresponding numerator of $c_1$ for the triple $(\bE_\la,\bE_\rho,\bE_\oxr)$ rather than the triple $(\bE_\la,\bE_\mu,\bE_\rho)$. Thus, we can conclude that 
\[ -q_\oxr q_\la^2+(q_\la p_\la-1)(p_\oxr-6q_\oxr)=q_\mu.\] 
We find $c_1=\frac{q_{x \mu}}{t_\la}$ and $c_2=\frac{q_\mu}{t_\la}$ as desired. 

Now, consider the case $\Tt=y\overline{\Tt}$. Then, we want to solve for $c_1,c_2$ such that
\[ \ovr{VY}=\begin{pmatrix} -q_\la^2 \\ q_\la p_\la-1 \end{pmatrix}=c_1 \vn_{\ovy V}+c_2 \vn_X=c_1\begin{pmatrix}-q_{\oyl} \\ p_\oyl \end{pmatrix}+c_2 \begin{pmatrix} p_\rho-6q_\rho \\ q_\rho \end{pmatrix} .\]
Similarly to above, we can solve for $c_1$ and $c_2$ to get
\[ c_1=\frac{-q_\rho v_1+(p_\rho-6q_\rho)v_2}{p_\rho p_{\oyl}-6p_{\oyl} q_\rho+q_\rho q_{\oyl}}, \quad \text{and} \quad c_2=\frac{p_\oyl v_1+q_\oyl v_2}{p_\rho p_\oyl-6p_\oyl q_\rho+q_\rho q_\oyl}.\]
In both case, the denominator is $t_\la$ by Lemma~\ref{lem:exident}~(v) for the triple $(\bE_\oyl,\bE_\la,\bE_\rho).$
The numerator of $c_1=q_{x\mu}$ as it's the same numerator as $c_1$ in \eqref{eq:c1c2}.
For the numerator of $c_2$, we have
\[q_{\oyl}(p_\la q_\oyl-q_\la p_\oyl)-q_\oyl=q_\oyl t_\rho-q_\oyl=q_\mu.\]
This concludes the second case. 

In both cases, we have
$c_1=\frac{q_{x\mu}}{t_\la}$ and $c_2=\frac{q_\mu}{t_\la}.$ 
Then, to compute $M_X(\ovr{VY}),$ we consider
\begin{align*} M_X(\ovr{VY})&=c_1 M_X(\vn_{V})+c_2 M_X(\vn_X)\\
&=c_1\begin{pmatrix} p_\mu-6q_\mu \\ q_\mu \end{pmatrix}+c_2 \begin{pmatrix} 
p_\rho-6q_\rho \\ q_\rho
\end{pmatrix}=\begin{pmatrix}
1+p_\mu q_\mu-6q_\mu^2 \\ q_\mu^2
\end{pmatrix}
\end{align*}
where the last equality follows from Lemma~\ref{lem:exident}~(vi) and the computations in Lemma~\ref{lem:xnod}~(ii)~and~(iii).
\end{proof}

\begin{lemma} \label{lem:ynod} For $\Tt \in \Cc_n$, letting $Q(\Tt)=OXVY$, then the nodal rays and direction vectors of $yQ(\Tt)$ 
satisfy the formulas of the quadrilateral $Q(y\Tt),$ that is 
\begin{itemize}
     \item[{\rm (i)}] $M_Y=\begin{pmatrix} 1-p_\la q_\la & -q_\la^2 \\ p_\la^2 & 1+p_\la q_\la \end{pmatrix}$
     \item[{\rm (ii)}] $M_Y\vn_{\ovy V}=\begin{pmatrix}
    q_\mu \\ -p_\mu 
    \end{pmatrix}$ 
    \item[{\rm (iii)}]  $M_Y\vn_{\ovx V}=\begin{pmatrix}
    q_\mu \\ -p_\mu 
    \end{pmatrix}$ 
   \item[{\rm (iv)}] $M_Y(\ovr{XV})=\begin{pmatrix}
   -q_{\mu}^2 \\ q_\mu p_\mu -1
    \end{pmatrix}$ 
\end{itemize}
\end{lemma}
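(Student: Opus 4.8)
The plan is to run the $x$-mutation argument (Lemma~\ref{lem:xnod} together with Lemma~\ref{lem:vy}) with the roles of $x$ and $y$ interchanged: this swaps $\bE_\la$ with $\bE_\rho$ in the ``axis'' position, $\bE_{\oxr}$ with $\bE_{\oyl}$ in the ``vertex~$V$'' position, and the identities of Lemma~\ref{lem:exident}~(vi) with those of~(vii). Recall from Definition~\ref{def:QT} and Remark~\ref{rmk:unique} that if $\Tt = y\ov{\Tt}$ then the nodal ray at $V$ is $\vn_{\ovy V} = (-q_{\oyl},p_{\oyl})^T$ with $\bE_{\oyl} = t_\rho\bE_\la - \bE_\mu$, while if $\Tt = x\ov{\Tt}$ it is $\vn_{\ovx V} = -(p_{\oxr}-6q_{\oxr},\,q_{\oxr})^T$ with $\bE_{\oxr} = t_\la\bE_\rho - \bE_\mu$; and that under a $y$-mutation the old vertex $V$ becomes the new $Y$-vertex $Y_y$ and the old side $XV$ maps to the new side $V_yY_y$, so $M_y$ should send the nodal ray at $V$ to the $\vn_Y$-slot $(q_\mu,-p_\mu)^T$ of $y\Tt = (\bE_\mu,\bE_{y\mu},\bE_\rho)$ and the direction $\ovr{XV}$ to its $\ovr{VY}$-slot $(-q_\mu^2,\,q_\mu p_\mu-1)^T$.

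Part (i) is immediate: $M_y$ is the unique linear map with $M_y\vn_Y = \vn_Y$ and $M_y\ovr{VY} = -\vec e_Y$, and since $\vn_Y = (q_\la,-p_\la)^T$ and $\ovr{VY} = (-q_\la^2,\,q_\la p_\la - 1)^T$ are linearly independent (their determinant is $-q_\la \ne 0$), it suffices to check by a one-line computation that the displayed matrix sends these two vectors to $(q_\la,-p_\la)^T$ and $(0,-1)^T$; one also reads off $\det M_y = 1$. For (ii) and (iii) I would multiply $M_y$ against the two possible nodal rays at $V$ and simplify each coordinate. Case (ii) (predecessor a $y$-mutation): writing $p_{\oyl} = t_\rho p_\la - p_\mu$, $q_{\oyl} = t_\rho q_\la - q_\mu$ and using $p_\la q_{\oyl} - q_\la p_{\oyl} = q_\la p_\mu - p_\la q_\mu = t_\rho$ (the $t_\rho$-compatibility of $\bE_\la,\bE_\mu$; cf.\ Definition~\ref{def:gentr}(b), Lemma~\ref{lem:recuradj}(ii)) gives $M_y\vn_{\ovy V} = (q_\mu,-p_\mu)^T$ directly. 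Case (iii) (predecessor an $x$-mutation): writing $p_{\oxr} = t_\la p_\rho - p_\mu$, $q_{\oxr} = t_\la q_\rho - q_\mu$, collapsing the $p_\la$-terms via $p_\la(p_{\oxr}-6q_{\oxr}) + q_\la q_{\oxr} = t_\rho$ (Lemma~\ref{lem:exident}~(iv) applied to the predecessor triple $\ov{x}\Tt = (\bE_\la,\bE_\rho,\bE_{\oxr})$), and then invoking the relevant identities of Lemma~\ref{lem:exident}~(ii)--(iii), the two coordinates again come out to $(q_\mu,-p_\mu)^T$.

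For (iv), the one genuinely geometric point (see Remark~\ref{rmk:geom}), I would copy the strategy of Lemma~\ref{lem:vy}. The $y$-mutation fixes the intersection point $P_y$ of the nodal rays $\vn_Y$ and $\vn_V$, since it fixes $\vn_Y$ and carries $\vn_V$ to $(q_\mu,-p_\mu)^T$ by (ii)--(iii). Writing the segment from $V$ to the point $V_y$ where the ray from $Y$ meets the side $XV$ as $\ovr{VP_y} + \ovr{P_yV_y}$, a sum of multiples of $\vn_V$ and $\vn_Y$, and using that $V_y$ lies on the line through $X$ and $V$, one gets a decomposition $\ovr{XV} = c_1\vn_V + c_2\vn_Y$. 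Solving the resulting $2\times 2$ system — split into the cases $\Tt = x\ov{\Tt}$ and $\Tt = y\ov{\Tt}$ exactly as in the two halves of Lemma~\ref{lem:vy}, with $t_\la$-compatibility and Lemma~\ref{lem:exident}~(v) used to simplify — should give in both cases $c_1 = -q_{y\mu}/t_\rho$ and $c_2 = -q_\mu/t_\rho$, where $q_{y\mu} = t_\rho q_\mu - q_\la$. Then
\[
M_y\ovr{XV} \;=\; -\tfrac{q_{y\mu}}{t_\rho}\,(q_\mu,-p_\mu)^T \;-\; \tfrac{q_\mu}{t_\rho}\,(q_\la,-p_\la)^T \;=\; (-q_\mu^2,\,q_\mu p_\mu - 1)^T ,
\]
the last equality being precisely Lemma~\ref{lem:exident}~(vii); by Definition~\ref{def:QT} this is the $\ovr{VY}$-entry of the quadrilateral associated to $y\Tt$, which finishes the proof.

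I expect (iv) to be the main obstacle. Parts (i)--(iii) are routine matrix arithmetic once the correct entries of Lemma~\ref{lem:exident} are matched up under the $x\leftrightarrow y$ symmetry, but (iv) forces the detour through $P_y$ together with the somewhat fiddly verification that the coefficients of $\ovr{XV} = c_1\vn_V + c_2\vn_Y$ come out (up to overall sign) as $q_{y\mu}/t_\rho$ and $q_\mu/t_\rho$ — which is exactly what makes Lemma~\ref{lem:exident}~(vii) applicable verbatim — and the need to check that this simplification is insensitive to whether the predecessor of $\Tt$ was an $x$- or $y$-mutation.
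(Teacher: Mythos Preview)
Your proposal is correct and follows essentially the same approach as the paper: (i) by direct computation, (ii)--(iii) by matrix multiplication reduced via the appropriate identities in Lemma~\ref{lem:exident} (the paper cites (iii),~(iv),~(v) for part~(iii), while you use (iii) and~(iv), but either works), and (iv) by decomposing $\ovr{XV}$ as $-\tfrac{q_{y\mu}}{t_\rho}\vn_V - \tfrac{q_\mu}{t_\rho}\vn_Y$ in the two predecessor cases and then applying Lemma~\ref{lem:exident}~(vii). This is exactly the decomposition the paper records, and your identification of~(iv) as the step requiring the $P_y$ detour matches the paper's Remark~\ref{rmk:geom}.
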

\begin{proof}
(i) follows by direct computation; (ii) follows similarly to Lemma~\ref{lem:xnod}~(ii); (iii) follows similarly to Lemma~\ref{lem:xnod}~(iii) but using the  identities in Lemma~\ref{lem:exident}~(iii),~(iv),~and~(v).
For (iv), we follow a similar procedure to Lemma~\ref{lem:vy} to show that 
\[ \ovr{XV}=-\frac{q_{y\mu}}{t_\rho}\vn_V-\frac{q_\mu}{t_\rho}\vn_Y,\]
and then use Lemma~\ref{lem:exident}~(vii) to conclude the result. We leave the details to the reader. 
\end{proof}

  \section{Relation to Obstructions} \label{sec:obs}
   In this section, we explain how Theorem~\ref{thm:Main1} and the triples $\Cc_n$ defined in Definition~\ref{def:triples} relate to obstructions of symplectic embeddings.

     For positive number $z$, the {\bf weight decomposition} $\bw(z):=\left(w_1^{\times \ell_1},w_2^{\times \ell_2},w_3^{\times \ell_3},\hdots\right)$ is obtained by  inductively decomposing a rectangle with side lengths 1 and $z$ into squares as large as possible. Here, $\times \ell_i$ denotes repeating the entry $w_i$ a total of $\ell_i$ times in $\bw(z)$. Specifically, we set $w_0=\max\{1,z\}$ (note, $w_0$ is not in $\bw(z)$), $w_1=\min\{1,z\}$, and $w_{k}=w_{k-2}-\ell_{k-1} w_{k-1}$ where $\ell_{k}=\lfloor \frac{w_{k-1}}{w_{k}} \rfloor$. The multiplicities $\ell_i$ equal the entries of the continued fraction of $z$. 
   
    For rational $z=p/q,$ work of McDuff in \cite{Mell} generalized by Cristofaro-Gardiner in \cite{CG2} show that 
   \[ E(1,z) \sembeds \la H_b \iff \sqcup_i B(w_i) \sembeds \la H_b\]
   where $\bw(z)=(w_1^{\times \ell_1},\hdots,w_n^{\times \ell_n})$ is the weight expansion of $z$.

   If $\sqcup_i B(w_i) \sembeds \la H_b,$ then we can perform a symplectic blowup along the image of each of the balls $B(w_i).$ This results in a symplectic manifold $(X:=\la H_b \#_i \oCP^2_{w_i},\omega),$ an $(n+1)$-blowup of $\CP^2$ where
   \[ PD[\omega]=\la L-b\la E_0-\sum w_i E_i,\]
   where $L$ is the homology class of a line, $E_0$ is the class of exceptional divisor from the blowup $H_b$, and $E_i$ denotes the class of the exceptional divisor from each of the new blowups. Further, the canonical class for this symplectic form is 
   \[ PD(K)=-3L+ \sum_{i=1}^n E_i.\]
    To denote an element $\bE \in H_2(X,\Z)$, we write $\bE$ in coordinates $(d,m,\bbm):=\bE$ with respect to the basis $\{L,E_0,E_1,\hdots,E_n\}.$

   Denote $\Ee_n$ to be the set of classes in $H_2(\CP^2 \#_{n+1} \oCP^2,\Z)$ that can be represented by an embedded symplectic sphere of self intersection $-1$ and $\Ee:=\bigcup_{n \geq 0} \Ee_n$. 
   
   For all elements $\bE:=(d,m,\bbm) \in \Ee$, the fact that the following intersection is nonnegative gives us a lower bound on the size of the target $\la$ in the embedding: 
   \[ (d,m,\bbm) \cdot PD[\omega] \geq 0 \implies  d\la -m \la b -\sum \tilde{m}_i w_i \geq 0 \implies \la \geq \sum \frac{\tilde{m}_iw_i}{d-mb}=:\mu_{\bE,b}(z)\]
   where we pad the end of either $\bbm$ or $\bw(z)$ with zeros if necessary. We call $\mu_{\bE,b}(z)$ an {\bf obstructive function}. If $z$ is irrational, then the weight expansion $\bw(z)$ is infinite, and the definition of the obstructive function can be extended by only considering the first $j$ elements of $\bw(z)$ if $\bbm$ has length $j.$

   The above discussion implies that for each $\bE \in \E$, we have the lower bound
   \[ \mu_{\bE,b}(z) \leq c_b(z). \]
   In fact, the collective work of \cite{CG,li-li,li-liu,MP,Mell} proves that
   \[ c_b(z)=\sup\{V_b(z),\sup_{\bE \in \E} \mu_{\bE,b}(z)\}.\]
Elements of $\Ee$ have Chern number $1$ and self intersection $-1.$ Defining $\tilde{\Ee}_n$ to be classes in $H_2(\CP^2 \#_{n+1} \oCP^2)$ with Chern number $1$ and self intersection $-1$ and $\tilde{\Ee}=\cup_{n \geq 0} \Ee_n$, then work of Hutchings in \cite{Hutch} shows the supremum over the larger set of classes
 \begin{equation} \label{eq:supComp} c_b(z)=\sup\{V_b(z),\sup_{\bE \in \tilde{\E}} \mu_{\bE,b}(z)\}
   \end{equation}
   also holds. For a class $\bE \in \tilde{\E}$, if there is some $b$ and $z$ such that
\[ c_b(z)=\mu_{\bE,b}(z),\] we say $\mu_{\bE,b}(z)$ is {\bf live} at $z.$

    For our purposes, we restrict to a specific subset of classes in $\tilde{\E}$ referred to as {\bf quasi-perfect classes}. These classes are of the form \[ \bE=(d,m,\bbm), \quad \text{where} \quad q\bw(p/q)=\bbm ,\] and hence we denote such classes as $\bE=(d,m,p,q).$ Due to properties of the weight expansion, for a quasi-perfect class, the conditions regarding the self-intersection and the Chern number are equivalent to  \[ d^2-m^2=pq-1 \quad \text{and} \quad 3d-m=p+q,\]
    which are the same conditions defined in \eqref{eq:diophEq}. Therefore, as used in Section~\ref{sec:compat} such a class $(d,m,p,q)$ is a Diophantine tuple. Hence, all tuples in the triples in $\Cc_n$ correspond to these homology classes in $\E$ giving obstructions. As a slight abuse of notation, for any triple $\Tt \in \Cc_n$ such that $\bE$ is one of the tuples in $\Tt,$ we say $\bE \in \Cc_n.$ 

     \begin{example} \rm
        The Diophantine tuple $(3,2,6,1)$ appearing in $\Tt^*_0$ has $p/q=6/1.$ The weight expansion is
        \[ \bw(6)=(1^{\times 6}),\] and this tuple corresponds to the homology class
        \[ 3L-2E_0-\sum_{i=1}^6 E_i.\] \hfill$\er$
    \end{example}

\begin{definition}
    A quasi-perfect class $\bE$ is a {\bf blocking class} if for some $b \in [0,1)$, 
    \[ V_b(\acc(b)) < \mu_{\bE,b}(\acc(b)).\] 
\end{definition}
    By \eqref{eq:accVol}, if $\bE$ is a blocking class for some $b \in [0,1),$ then $b$ is blocked, and hence, $c_b(z)$ does not have an infinite staircase. For each $\bE \in \Cc_n$, we define 
    \[ J_\bE:=\{ b \in [0,1) \ | \ V_b(\acc(b))<\mu_{\bE,b}(\acc(b)) \}.\] By \cite[Prop.2.2.9]{MM}, for each $\bE \in \Cc_n$, the set $J_\bE$ is a nonempty open interval. We denote the left endpoint $\inf(J_\bE):=b^\ell_\bE.$ A corollary of Theorem~\ref{thm:Main1} is as follows: 
    \begin{cor} \label{cor:4Dusa} 
    For all $\bE \in \Cc_n,$ the left endpoint $b^\ell_\bE$ of the interval $J_\bE$ blocked by $\bE$ is unobstructed, that is
    \[ c_{b^\ell_\bE}(\acc(b^\ell_\bE))=V_{b^\ell_\bE}(\acc(b^\ell_\bE)).\]
    \end{cor}
    \begin{proof} First, we recall the notation from the proof of Theorem~\ref{thm:Main1}. For each word $wyv^{n+2}$, let $\Tt:=(\bE_0,\bE_1,\bE):=w\Tt^*_n$ where $\bE:=(d,m,p,q)$ and $b_\bE$ is as in Definition~\ref{def:brho}, i.e.
    \[ b_\bE:=\lim_{k \to \infty} m_k/d_k\]
 where $y^y\Tt=:(\bE_k,\bE_{k+1},\bE).$ Then, in the proof of Theorem~\ref{thm:Main1}, we showed that 
 \[ \lim_{k \to \infty} y^kwyv^{n+2}Q_{b_\bE}\] is a triangle and 
 \begin{equation} \label{eq:embedMain} c_{b_\bE}(\acc(b_\bE))=V_{b_\bE}(\acc(b_\bE)).\end{equation} 
    By Lemma~\ref{lem:compatCond}, the classes $\{\bE_k\}_{k \geq 0}$ are adjacent to $\bE$ for all $k$ and further the recursion parameter is given by $p_1q_0-p_0q_1=\sqrt{p^2+q^2-6pq+8}$. In the language of \cite{ICERM}, this implies that $\bE$ is the  associated blocking class to $\{\bE_k\}_{k \geq}.$ Therefore, by \cite[Thm.52]{ICERM}, the left endpoint $b^\ell_\bE$ of the blocked interval $J_\bE$ is given by 
        \[b^\ell_\bE= \lim_{k \to \infty} \frac{m_k}{d_k},\] 
      and hence, $b^\ell_\bE=b_\bE$ and by \eqref{eq:embedMain}, the result follows. 
    \end{proof}

     The study of the tuples $\bE \in \Cc_n$ as obstructions to symplectic embeddings is done in \cite{MMW}. In fact, the authors of \cite{MMW} use Corollary~\ref{cor:4Dusa} to show that the classes $\bE \in \Cc_n$ are not only quasi-perfect classes but also represent embedded symplectic spheres, i.e. $\bE \in \Ee$, called {\bf perfect classes}. The work of \cite{MMW} shows that the tuples in $\Cc_n$ are the only perfect classes with $p/q \geq 6$.
\begin{remark} \label{rmk:cantor} \rm
    By \cite[Thm.1.1.1]{MMW}, for a fixed $n,$ the union of $J_{\bE}$ for $ \bE \in \Cc_n$ is homeomorphic to the complement of a middle third Cantor set. By Corollary~\ref{cor:4Dusa}, the $b$-values we consider in Theorem~\ref{thm:Main1} are the left endpoints of these intervals, see Figure~\ref{fig:tree}. If we consider the corresponding possible accumulation points blocked by $ \bE \in \Cc_n$, then these classes block a dense interval of $[2n+6,2n+8].$ 
 \end{remark}

If we suspect the embeddings from the base diagrams are optimal, it is not surprising that the numerics of the base diagrams relate to the numerics of the perfect classes, as the perfect classes give sharp obstructions. But it is still not known for what $b$-values and $z$-values the ATF base diagrams are giving sharp embeddings, i.e. computing $c_b(z)$. Hence, understanding the correspondence between the numerics might be useful. Here, we make a few observations about the perfect classes and the ATF base diagrams.\footnote{The anonymous referee comments inspired including some of these observations.} 

    \begin{itemize}
        \item For all $\bE \in \Cc_n,$ there is a triple such that $\Tt:=((d,m,p,q),\bE_\mu,\bE_\rho)$ for some $\bE_\mu,\bE_\rho \in \Cc_n$.  If we consider the base diagram $Q(\Tt)$, the corner at $Y$ can be seen in Figure~\ref{fig:RHB}. Following \cite[Sec.9.3]{S}, locally, the corner is an ATF base diagram for the rational homology ball $B_{q,p}$ with boundary the lens space $L(q^2,pq-1)$ in $H_b$ for $b$ such that $Q(\Tt)$ is defined. In particular, this holds for $b=b_{\bE_\rho}.$ 
        \item Removing the nodal rays of the diagrams in Definition~\ref{def:QT}, the diagrams corresponds to the moment polygon for a singular toric symplectic manifold. Here, the singularities are cyclic quotient singularities. For $Q(\Tt)$ where $\Tt=(\bE_\la,\bE_\mu,\bE_\rho) \in \Cc_n$, the singularity at the $X,V,Y$ corner of $Q(\Tt)$ is determined by $p_\la/q_\la,p_\mu/q_\mu,p_\rho/q_\rho$, respectively. While this statement is clear for the $X,Y$ corners, the statement holds for $V$ as well because by Proposition~\ref{prop:triple} and Definition~\ref{def:QT}, we know the image of the corner $V$ under $M_Y.$ This preserves the singularity type.  See \cite[Sec.7.4]{evanBook} for more details. 
        \item For a triple $\Tt:=(\bE_0,\bE_1,\bE),$ the base diagrams limiting to the triangle are $y^k(Q(\Tt))=Q(y^k\Tt).$ Letting $y^k\Tt=(\bE_k,\bE_{k+1},\bE),$ the work of \cite{MMW} using Corollary~\ref{cor:4Dusa} shows that the obstructive functions $\mu_{\bE_k,b_\bE}(z)$ are live near $z=p_k/q_k.$ The sequence of ATFs imply that these infinite staircases correspond to a sequence of embeddings: $ B_{q_k,p_k} \hookrightarrow H_{b_\bE}$
        where $B_{q_k,p_k}$ is the rational homology ball with boundary the lens space $L(q_k^2,p_kq_k-1).$ 
        \item As mentioned in Remark~\ref{rmk:future}~(iii), different $v$-mutations are expected to correspond to the symmetries. Hence, we expect similar relationships between the ATFs and the perfect classes $(d,m,p,q)$ with $p/q \leq 6$ studied in \cite{MMW}. In \cite{MMW}, all perfect classes with $3+2\sqrt{2} \leq p/q$ were found. 
    \end{itemize}

     \begin{center}
 \begin{figure}
 \begin{overpic}[unit=0.5mm]{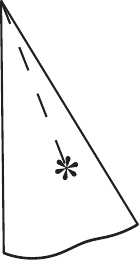}
 	\put (8,20) {$\begin{pmatrix} q \\ -p \end{pmatrix}$}
	\put (30,60) {$\begin{pmatrix} -q^2 \\ q p-1 \end{pmatrix}$}
  \end{overpic}
     \caption{As seen in \cite[Fig.13]{S}, this is an ATF for a rational ball with boundary $L(q^2,pq-1)$.}
     \label{fig:RHB}
 \end{figure}
 \end{center}


\begin{thebibliography}{CG-HMP}


\bibitem[BHM]{ICERM} M. Bertozzi, T. Holm, E. Maw, D. McDuff, G. Mwakyoma, A. R. Pires,and M. Weiler:  Infinite Staircases for Hirzebruch Surfaces, arXiv:2010.08567, Springer-Verlag, 2021.

\bibitem[BS]{BS} J. Brendel and F. Schlenk, Pinwheels as Lagrangian barriers, arXiv: 2210.00280.

\bibitem[BHO]{BHO} O. Buse, R. Hind, and E. Opshtein, Packing stability for symplectic four-manifolds, {\it Trans. Amer. Math. Soc.} {\bf 368} (2016), 8209-8222.

\bibitem[CV]{CV}  R. Casals and R. Vianna, Sharp ellipsoid embeddings and toric mutations, arXiv:2004.13232

\bibitem[CG]{CG} D. Cristofaro-Gardiner, Special eccentricities of rational four-dimensional ellipsoids, 
arXiv:2004.13647.

\bibitem[CG2]{CG2}  
D. Cristofaro-Gardiner, Symplectic embeddings from concave toric domains into convex ones, {\it J. Differential Geom.} {\bf 112}, (2019), 199-232. 

\bibitem[CG-HMP]{AADT}
D. Cristofaro-Gardiner, T. Holm, A. Mandini, and A. R. Pires,  On infinite staircases in toric symplectic four-manifolds, arxiv: 2004.07829.

\bibitem[E]{evanBook} J.D.Evans, Lectures on Lagrangian torus fibrations, arxiv:2110.08643v4.

\bibitem[ES]{ES} J. D. Evans and I. Smith. Markov numbers and Lagrangian cell complexes in the complex projective plane, {\it Geom. Topol. } {\bf 22}(2), 1143-1180, (2018).

\bibitem[EU]{EU} J. D. Evans and G. Urz\'ua, Antiflips, mutations, and unbounded symplectic embeddings of rational homology balls, {\it Ann. de I'Institut Fourier} {\bf 71} (2021). no.5, 1807-1843.

\bibitem[FHM]{REU} C. Farley, T. Holm, N. Magill, J. Schroder, M. Weiler, Z. Wang, and E. Zabelina, Four-periodic infinite staircases for four-dimensional polydisks, arXiv:2210.15069.

\bibitem[FM]{FM} D. Frenkel and D. M\"uller, Symplectic Embeddings of four-dimensional ellipsoids into cubes, {\it J. of Symplectic Topol.} {\bf 13}, (2015), 765--847. 

\bibitem[Hu]{Hutch} M. Hutchings, ``Quantitative embedded contact homology'', \textit{J. Diff. Geom.} 88(2):231–266, 2011.

\bibitem[LS]{LS} N. C. Leung and M. Symington. Almost toric symplectic four-manifolds. {\it J. Symplectic Geom.}, 8(2): 143-187,2010. 

\bibitem[LiLi]{li-li} B-H Li and T-J Li, ``Symplectic genus, minimal genus and diffeomorphisms,'' \textit{Asian J. Math.} 6:123-144, 2002.

\bibitem[LiLiu]{li-liu} T-J Li, A-K Liu, ``Uniqueness of symplectic canonical class, surface cone and symplectic cone of
4-manifolds with $b_+ = 1$,'' \textit{J. Differential Geom.} 58:331-370, 2001.

\bibitem[M]{M} N. Magill, Almost toric fibrations in 2-fold blowups of $\oCP^2$, in preparation.

\bibitem[MM]{MM} N. Magill and  D. McDuff, Staircase symmetries  in Hirzebruch surfaces, arXiv:2010.08567.


\bibitem[MMW]{MMW} N. Magill, D. McDuff, and M. Weiler, Staircase patterns in Hirzebruch surfaces, arxiv:2203.06453. 

\bibitem[Mc]{Mell} D. McDuff, Symplectic embedding os 4-dimensional ellipsoids, {\it J. Topol.} {\bf 8} (2015), no. 4, 1119-1122. 

\bibitem[MP]{MP} D. McDuff and L. Polterovich, Symplectic packings and algebraic geometry, {\it Invent. Math.} 115: 405-29. (1994). 

\bibitem[McS]{ball}
D. McDuff and F. Schlenk, The embedding capacity of 4-dimensional symplectic ellipsoids, {\it Ann. Math}  (2) 175 (2012), no. 3, 1191--1282.

\bibitem[S]{S}
M. Symington, Four dimensions from two in symplectic topology, In {\it Topology and geometry of Manifolds} (Athens, GA, 2001), volume 71 of {\it Proc. Sympos. Pure Math.} 153-208, Amer. Math Soc., Providence, RI, (2003).


\bibitem[U]{Usher}
M. Usher, Infinite staircases in the symplectic embedding problem for four-dimensional ellipsoids into polydisks, {\it Algebr. Geom. Topol.} 19(4):1935-2022, 2019.

\bibitem[V1]{V1} R. Vianna. Infinitely many exotic monotone Lagrangian tori in $\C P^2$. {\it J. Topol.}, 9(2): 535-551, (2016).

\bibitem[V2]{V2} R. Vianna. Infinitely many monotone Lagraingian tori in Del Pezzo surfaces. {\it Selecta Mathematica}, {\bf 23}: 1955-1996, (2017).
\end{thebibliography}
\end{document}